\newtheorem{tm}{Theorem}[section]
\newtheorem{lemma}[tm]{Lemma}
\newtheorem{prop}[tm]{Proposition}
\newtheorem{theorem}{Theorem}[section]
\newtheorem{corollary}[theorem]{Corollary}
\newtheorem{example}[theorem]{Example}
\newtheorem{proposition}[theorem]{Proposition}
\newtheorem{remark}[theorem]{Remark}
\newcommand{\beqa}{\begin{eqnarray*}}
	\newcommand{\eeqa}{\end{eqnarray*}}
\DeclareMathOperator*{\supp}{supp}
\newcommand{\field}[1]{\mathbb{#1}}
\newcommand{\bR}{\field{R}}        
\newcommand{\bN}{\field{N}}        
\def\la{\lambda}
\def\eps{\epsilon}
\def\cF{\mathcal{F}}              
\def\cS{\mathcal{S}}
\def\cC{\mathcal{C}}
\def\a{\aleph}
\def\rd{\bR^d}
\def\rdd{{\bR^{2d}}}
\def\lrd{L^2(\rd)}
\def\intrdd{\int_{\rdd}}
\def\R{\right)}
\def\<{\left<}
\def\>{\right>}
\def\inv{^{-1}}
\def\mv1{M_v^1}
\def\a{\alpha}
\def\b{\beta}
\def\R{\mathbb{R}}
\def\Ren{\mathbb{R}^d}
\def\sch{\mathcal{S}}
\def\Fur{\mathcal{F}}
\def\f{\varphi}
\def\Sn2{S_{2}(L^{2}(\Ren))}
\def\S1{S_{1}(L^{2}(\Ren))}
\def\sig00{\sigma_{0,0}}
\def\la{\langle}
\def\ra{\rangle}
\def\p{\Phi}
\def\b{\beta}
\def\g{\gamma}
\begin{document}

\begin{abstract}
We study continuity properties in Lebesgue spaces for a class of Fourier integral operators arising in the study of the Boltzmann equation. The phase has a H\"older-type singularity at the origin. We prove boundedness in $L^1$ with a precise loss of decay depending on the H\"older exponent, and we show by counterxemples that a loss occurs even in the case of smooth phases. The results can be seen as a quantitative version of the Beurling-Helson theorem for changes of variables with a  H\"older singularity at the origin. The continuity in $L^2$ is studied as well by providing sufficient conditions and relevant counterexemples. The proofs rely on techniques from Time-frequency Analysis. 
\end{abstract}
\title{On Fourier integral operators with H\"older-continuous phase}\author{Elena Cordero, Fabio Nicola and Eva Primo}
\address{Department of Mathematics,
University of Torino, via Carlo Alberto
10, 10123 Torino, Italy}
\address{Dipartimento di Matematica,
Politecnico di Torino, corso Duca degli
Abruzzi 24, 10129 Torino, Italy}
\address{Departament d'An\`alisi Matem\`atica, Universitat de Val\`encia, Dr. Moliner 50, 46100-Burjassot, Val\`encia (Spain)}
\email{elena.cordero@unito.it}
\email{fabio.nicola@polito.it}
\email{eva.primo@uv.es}
\thanks{}
\subjclass[2010]{35S30, 47G10}
\keywords{Fourier integral
operators, modulation spaces,
short-time Fourier
  transform}
\maketitle

\section{Introduction}
In the study of the Boltzmann equation one faces with integral operators (Boltzmann collision operators) \begin{equation}\label{IO}
A f(x)= \int_{\bR^d} K(x,y)f(y)dy,\quad f\in \cS(\rd),
\end{equation} with  (collision) kernels 
\begin{equation}\label{K}
K(x,y)= \int_{\bR^d}\Phi(u) e^{- 2 \pi i  ( \b(|u|)u \cdot y -u \cdot x)}du,
\end{equation}
and is interested in estimates of the type 
\begin{equation}\label{es0}
\sup_{y\in\rd}\int_{\rd}|K(x,y)|\, dx<\infty,
\end{equation}
\cite{Lods2017}  (related references are provided by \cite{Lod2010, BD1998, MR2264623}). The estimate \eqref{es0} would imply the corresponding operator $A$ is bounded on $L^1(\rd)$. The function $\Phi(u)$ has a good decay at infinity but could be not smooth at the origin $u=0$. A typical example is given by radial functions 
\begin{equation}\label{PHIex}
\Phi(u)=\frac{|u|}{(1+|u|^2)^m}
\end{equation}
with large real $m$.\par
The phase $\beta(r)$ is real-valued and smooth on $(0,+\infty)$ but could have a H\"older type singularity at the origin. As an oversimplified model the reader can consider the case 
\begin{equation}\label{betaintro}
\beta(r)=a+br^\gamma,\quad  0<r\leq 1,
\end{equation}
for some $a,b\in\R$, $\gamma\in (0,1)$.  As $r\to+\infty$, $\beta(r)$ is assumed to approach a constant. \par
As basic case suppose $\beta(r)=a$, $r>0$, is a constant function. Rapid granular flows are described by
the Boltzmann equation and $\beta(r)=a$ corresponds to the case of inelastic interactions with constant restitution coefficient. 
Indeed, the loss of mechanical energy due to collisions is characterized by the  restitution coefficient $\beta$ which quantifies the loss of relative normal velocity of a pair of colliding particles after the collision with respect to the impact velocity.
Now, when $\beta(r)=a$ is constant,
 $$K(x,y)=\widehat{\Phi}(ay-x)$$ and the estimate \eqref{es0} holds if and only if $\Phi\in \mathcal{F} L^1(\rd)$, i.e.\ $\Phi$ has Fourier transform in $L^1(\rd)$. 
The major part of
the research, at the physical as well as the mathematical levels, has been devoted to this particular case of a constant restitution coefficient. However, as described in \cite{MR2101911, Lod2010}, a more relevant description of granular gases should
involve a variable restitution coefficient $\beta(r)$.

In the model case above $\beta(r)$ approaches a constant both as $r\to 0^+$ and $r\to+\infty$ and is smooth in between, so that one could conjecture that the same estimate holds in that case.  Now, this is not the case, even for smooth phases: we will prove in Proposition \ref{pro3.1} that, in dimension $d=1$, if $\tilde{\varphi}(u):=\beta(|u|)u$ is any {\it nonlinear} smooth diffeomorphism $\R\to\R$ with ${\tilde{\varphi}}(u)=u$ (hence $\beta(|u|)=1$) for $|u|\geq 1$, and $\Phi\in \cC^\infty_0(\R)$, $\Phi\equiv1$ on $[-1,1]$, then the weighted estimate
\begin{equation}\label{es0w}
\int_{\rd}|K(x,y)|\, dx\lesssim(1+|y|)^s
\end{equation}
does not hold for $s<1/2$. \par
This looks surprising at first glance, but it can be regarded as a manifestation of the Beurling-Helson phenomenon \cite{beurling53,GobalFIO,lebedev94,kasso07,MR2844679},  which roughly speaking states that the change-of-variable operator $f\mapsto f\circ \psi$ is not bounded on $\cF L^1(\rd)$ except in the case $\psi:\rd\to\rd$ is an affine mapping. Indeed the operator $A$ in \eqref{IO} with kernel $K(x,y)$ in \eqref{K} can be written as 
\[
Af =\cF^{-1}\Phi\ast \cF^{-1}(\cF f \circ \tilde{\varphi}),\quad{\rm with}\ \tilde{\varphi}(u):=\beta(|u|)u.
\]
Now, one is then interested to the precise growth in \eqref{es0w}. Let us summarize here the main results in the special case of our oversimplified model above. \par\medskip
$\bullet$ {\it Suppose $\beta(r)$ as in \eqref{betaintro} for $0<r\leq 1$, with $\gamma\in (-1,1]$ and assume $\beta$ has at most linear growth as $r\to+\infty$. Let $\Phi$ be as in \eqref{PHIex}, with $m>(d+1)/2$. Then \eqref{es0w} holds with $s=d/(\gamma+1)$.}\par\medskip
As expected, the growth in \eqref{es0w} is therefore the weakest one when $\gamma=1$, being $s=d/2$ in that case. The same growth occurs for smooth phases, as the following result shows. 
\par\medskip
$\bullet$ {\it Suppose that $\tilde{\varphi}(u):=\beta(|u|)u$ extends to a smooth function in $\rd$, with an at most quadratic growth at infinity.  Let $\Phi$ be as in \eqref{PHIex}, with $m>(d+1)/2$. Then \eqref{es0w} holds with $s=d/2$.}\par\medskip
By the above mentioned counterexample, this estimate is sharp, at least in dimension $d=1$. \par
Notice that the estimate \eqref{es0w} implies a continuity property for the corresponding operator between weighted $L^1$ spaces, precisely $L^1_{v_s}\to L^1$, where $v_s(x)=(1+|x|)^s$.\par
A natural question is therefore whether similar continuity estimates hold without a loss of decay at least in $L^2(\rd)$, under the above assumptions. We will show in Proposition \ref{41} below that, again, this is not the case. Sufficient conditions are instead given in \ref{tl2cont} below. Here is a simplified version of Theorem \ref{tl2cont} (and subsequent remark).  
\par\medskip
$\bullet$ {\it Suppose $\beta(r)$ as in \eqref{betaintro} for $0<r\leq 1$, with $\gamma>0$. Let $\Phi\in \cC^\infty(\rd)$ supported in $|u|\leq 1$.  Then, if $a(a+(\gamma+1)b)>0$ the operator $A$ in \eqref{IO}, \eqref{K} is bounded in $L^2(\rd)$.}\par\medskip
Actually the results below are stated for $\beta$ and $\Phi$ in classes of functions with minimal regularity and are inspired by the models above. It turns out that, in all the results it is sufficient to take $\Phi$ in the so-called Segal algebra $M^1(\rd)$ \cite{Segal81,Feich89,Feich2006}. Roughly speaking, a function $\Phi\in L^\infty(\rd)$ belongs to  $M^1(\rd)$ if locally has the regularity of a function in $\cF L^1(\rd)$ (in particular is continuous) and globally it decays as a function in $L^1(\rd)$,  but no differentiability conditions are required. We have $M^1(\rd)\hookrightarrow L^1(\rd)\cap \cF L^1(\rd)$. To compare this space with the usual Sobolev spaces we observe that $W^{k,1}(\rd)\subset M^1(\rd)$ for $k\geq d+1$, but functions in $M^1(\rd)$ do not need to have any derivatives. For examples, the functions $\Phi$ in \eqref{PHIex} are in $M^1(\rd)$ if $m>(d+1)/2$ (see Example \ref{exm1} below). It is important to observe that the very weak assumption $\Phi\in M^1(\rd)$ prevents us to use classical tools such as stationary phase estimates; instead we use techniques and function spaces from Time-frequency Analysis, which will be recalled in the next Section. Recently, such function spaces and more generally Time-frequency Analysis have been successfully applied in the study of partial differential equations with rough data  by a large number of authors, see, e.g., \cite{CNStricharzJDE2008,Ruzhansky2016,SugimotoWang2011,Wang2007} and references therein. 
We also refer to the papers \cite{GobalFIO,MR3165647} and the references therein for the problem of the continuity in $L^p(\rd)$, $1<p<\infty$, and from the Hardy space to $L^1(\rd)$, of general Fourier integral operators of H\"ormander's type (i.e. arising in the study of hyperbolic equations). \par\medskip

In short the paper is organized as follows. \\
In Section \ref{sec1} we briefly recall the  definitions of modulation and Wiener amalgam spaces and exhibit the main properties and preliminary results we need in the sequel.\par

In Section \ref{contl1} we study  the $L^1$-continuity  for the integral operators in \eqref{IO} having phases with H\"{o}lder-type singularity at the origin. The boundedness is attained at the cost of a loss of decay. Such a loss is unavoidable, as testified by an  example in dimension $d=1$ (cf. Proposition \ref{pro3.1}).\par 

In Section \ref{contl2} we study the $L^2$-continuity properties of $A$ in \eqref{IO}. Under the same assumptions of the $L^1$-boundedness results we provide a counterexample even in this framework (cf.\ Proposition \ref{41}). We then show conditions on the phase of the operators which guarantee $L^2$-boundedness  without loss of decay.

\section{Preliminaries}\label{sec1}



\textbf{Notation.} We define
$|x|^2=x\cdot x$, for
$x\in\rd$, where $x\cdot
y=xy$ is the scalar product
on $\rd$. The space of
smooth functions with compact
support is denoted by
$\cC_0^\infty(\rd)$, the
Schwartz class is
$\sch(\rd)$, the space of
tempered distributions
$\sch'(\rd)$.    The Fourier
transform is normalized to be
${\hat
	{f}}(u)=\cF f(u)=\int
f(t)e^{-2\pi i tu}dt$.
Translation and modulation operators ({\it time and frequency shifts}) are defined, respectively, by
$$ T_xf(t)=f(t-x)\quad{\rm and}\quad M_{u}f(t)= e^{2\pi i u
	t}f(t).$$
We have the formulas
$(T_xf)\hat{} = M_{-x}{\hat
	{f}}$, $(M_{u}f)\hat{}
=T_{u}{\hat {f}}$, and
$M_{u}T_x=e^{2\pi i
	xu}T_xM_{u}$. The notation
$A\lesssim B$ means $A\leq c
B$ for a suitable constant
$c>0$, whereas $A \asymp B$
means $c\inv A \leq B \leq c
A$, for some $c\geq 1$. The
symbol $B_1 \hookrightarrow
B_2$ denotes the continuous
embedding of the linear space
$B_1$ into $B_2$.

\subsection{Wiener amalgam
spaces \cite{Feichtinger_1983_Banach,Feichtinger_1981_Banach,Feichtinger_1990_Generalized,Fournier_1985_Amalgams,Feichtinger_1998_Banach}.}
Let $g \in \cC_0^\infty(\rd)$ be a test function that satisfies $\|g\|_{L^2}=1$. We will refer to $g$ as a window function. For $1\leq p\leq \infty$, recall the $\cF L^p$ spaces, defined by $$\cF L^p(\R^d)=\{f\in\cS'(\R^d)\,:\, \exists \,h\in L^p(\R^d),\,\hat h=f\};
$$
they are Banach spaces equipped with the norm
\begin{equation}\label{flp}
\| f\|_{\cF L^p}=\|h\|_{L^p},\quad\mbox{with} \ \hat h=f.
\end{equation}
Let $B$ one of the following Banach spaces: $L^p, \cF L^p$, $1\leq p\leq
\infty$. For any given function $f$ which is locally in $B$ (i.e. $g f\in B$, $\forall g\in\cC_0^\infty(\rd)$), we set $f_B(x)=\| fT_x g\|_B$.
The {\it Wiener amalgam space} $W(B,L^p)(\rd)$ with local component $B$ and global component  $L^p(\rd)$, $1\leq p\leq \infty$, is defined as the space of all functions $f$
locally in $B$ such that $f_B\in L^p(\rd)$. Endowed with the norm
$\|f\|_{W(B,L^p)}=\|f_B\|_{L^p(\rd)}$, $W(B,L^p)$ is a Banach space. Moreover,
different choices of $g\in \cC_0^\infty(\rd)$  generate the same space
and yield equivalent norms.

If  $B=\cF L^1$, the Fourier algebra,  the space of admissible
windows for the Wiener amalgam spaces $W(\cF L^1,L^p)(\rd)$ can be
enlarged to the so-called Feichtinger algebra $M^1(\rd)=W(\cF L^1,L^1)(\rd)$, which is also a modulation space, as shown below.
Recall  that the Schwartz class $\cS(\rd)$ is dense in $W(\cF L^1,L^1)(\rd)$.

\subsection{Modulation
spaces \cite{Grochenig_2001_Foundations}.}
Let $g\in\cS(\rd)$ be a non-zero
window function. The
short-time Fourier transform
(STFT) $V_gf$ of a
function/tempered
distribution $f$ with respect
to the window $g$ is
defined by
\[
V_g f(z,u)=\int e^{-2\pi i  u y}f(y)g(y-z)\,dy,
\]
i.e.,  the  Fourier transform $\cF$ applied to $fT_zg$.\par For
$1\leq p, q\leq\infty$, the modulation space $M^{p,q}(\R^d)$ is
defined as the space of measurable functions $f$ on $\R^d$ such that
the norm
\[
\|f\|_{M^{p,q}}=\|\|V_gf(\cdot,u) \|_{L^p}\|_{L^q_u}
\] 
is
finite. Among the properties of modulation spaces, we record that
$M^{2,2}(\rd)=L^2(\rd)$, $M^{p_1,q_1}(\rd)\hookrightarrow M^{p_2,q_2}(\rd)$, if $p_1\leq
p_2$ and $q_1\leq q_2$. If $p,q<\infty$, then
$(M^{p,q}(\rd))'=M^{p',q'}(\rd)$.\par For comparison, notice that the norm in
the Wiener amalgam spaces $W(\cF L^p,L^q)(\rd)$ reads
\begin{equation}\label{normwiener}
\|f\|_{W(\cF L^p,L^q)}=\|\|V_gf(z,\cdot) \|_{L^p}\|_{L^q_z}.
\end{equation}
The relationship between modulation and Wiener amalgam spaces is
expressed by  the following result.
\begin{proposition}\label{vm} For $1\leq p,q\leq\infty$, the
Fourier transform establishes
an isomorphism  $\cF:
M^{p,q}(\rd)\to W(\cF
L^p,L^q)(\rd)$.\par
\end{proposition}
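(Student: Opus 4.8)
The plan is to prove that the Fourier transform $\cF$ maps $M^{p,q}(\rd)$ isomorphically onto $W(\cF L^p,L^q)(\rd)$ by directly comparing the defining norms through the fundamental intertwining relation between the STFT and the Fourier transform. The starting point is the identity relating $V_{\hat g}\hat f$ to $V_g f$: using the formulas $(T_xf)\hat{}=M_{-x}\hat f$ and $(M_uf)\hat{}=T_u\hat f$ recorded in the Notation, together with $M_uT_x=e^{2\pi i xu}T_xM_u$, one computes that
\begin{equation*}
V_{\hat g}\hat f(z,u)=e^{-2\pi i zu}\,V_g f(-u,z).
\end{equation*}
First I would verify this identity carefully, since $V_gf(z,u)$ is by definition $\cF(fT_zg)(u)=\langle f, M_uT_zg\rangle$, and applying Parseval/Plancherel to pass to the Fourier side converts time shifts on $g$ into modulations on $\hat g$ and vice versa, which is exactly what swaps the roles of the two variables $z$ and $u$.

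Once this pointwise identity is in hand, the proof reduces to a change of variables in the iterated norms. Recall that by definition $\|f\|_{M^{p,q}}=\|\,\|V_gf(\cdot,u)\|_{L^p}\,\|_{L^q_u}$, where the inner $L^p$ norm is taken in the first ($z$) variable and the outer $L^q$ norm in the second ($u$) variable; whereas from \eqref{normwiener}, $\|F\|_{W(\cF L^p,L^q)}=\|\,\|V_gF(z,\cdot)\|_{L^p}\,\|_{L^q_z}$, where now the inner $L^p$ norm is in the second variable and the outer $L^q$ norm in the first. I would compute $\|\hat f\|_{W(\cF L^p,L^q)}$ using the window $\hat g$ (which is an admissible Schwartz window since $g\in\cS(\rd)$), substitute the identity above, and observe that the unimodular factor $e^{-2\pi i zu}$ disappears under the absolute values, while the argument swap $V_gf(-u,z)$ exactly interchanges the roles of the inner and outer integration variables, so that
\begin{equation*}
\|\hat f\|_{W(\cF L^p,L^q)}=\|\,\|V_gf(-u,z)\|_{L^p_z}\,\|_{L^q_u}=\|f\|_{M^{p,q}},
\end{equation*}
where a harmless reflection $u\mapsto -u$ leaves the $L^q$ norm invariant. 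This shows $\cF$ is an isometry (up to window-dependent equivalence of norms) from $M^{p,q}$ into $W(\cF L^p,L^q)$.

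To conclude that $\cF$ is an isomorphism \emph{onto}, I would invoke that $\cF$ is a bijection on $\cS'(\rd)$ with $\cF^{-1}$ given by the inverse Fourier transform, and that the same computation applied with $\cF^{-1}$ in place of $\cF$ (using the analogous intertwining identity) shows the inverse map is bounded $W(\cF L^p,L^q)\to M^{p,q}$; alternatively, since the norm identity holds on the dense Schwartz subspace (recall $\cS(\rd)$ is dense in both spaces) and both spaces are Banach, the isometry extends to a surjective isomorphism by a standard density and completeness argument. The main obstacle, and the step requiring the most care, is the bookkeeping in the covariance identity: one must track precisely how the phase factor and the swap of the two variables arise, and confirm that the independence of the resulting space on the choice of window $g$ guarantees that using $\hat g$ rather than an arbitrary admissible window does not affect the conclusion. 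Everything else is a routine manipulation of iterated $L^p$--$L^q$ norms and a reflection of variables.
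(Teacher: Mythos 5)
Your approach is sound and is in fact the standard proof of this classical fact; note that the paper itself gives \emph{no} proof of Proposition \ref{vm} (it is quoted as known from the modulation space literature), and the literature argument is exactly the one you outline: the covariance identity $V_{\hat g}\hat f(z,u)=e^{-2\pi i zu}V_gf(-u,z)$, cancellation of the unimodular phase under absolute values, the swap of the two slots converting one iterated norm into the other, and surjectivity from the bijectivity of $\cF$ on $\cS'(\rd)$.

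Three points need fixing or care. (1) The subscripts in your final display are attached to the wrong norms. Substituting the identity into \eqref{normwiener} (inner $L^p$ in the \emph{second} argument of $V_{\hat g}\hat f$, i.e.\ in $u$; outer $L^q$ in $z$) gives
\[
\|\hat f\|_{W(\cF L^p,L^q)}=\bigl\|\,\|V_gf(-u,z)\|_{L^p_u}\,\bigr\|_{L^q_z},
\]
and since $-u$ sits in the first slot of $V_gf$ and $z$ in the second, this is exactly $\|f\|_{M^{p,q}}$ after the harmless reflection. As literally written, with $L^p_z$ inside and $L^q_u$ outside, your right-hand side is (a reflection of) the Wiener amalgam norm of $f$ itself, and the claimed equality with $\|f\|_{M^{p,q}}$ would be false; the prose around the display describes the correct interchange, so this is a slip, but it must be corrected since the whole proof is this bookkeeping. (2) With the paper's STFT convention, which carries no complex conjugate on the window, the identity comes out as $V_{\hat g}\hat f(z,u)=e^{-2\pi i zu}V_{g(-\cdot)}f(-u,z)$, i.e.\ with the reflected window $g(-\cdot)$ in place of $g$; this is harmless precisely because of the window-independence you invoke, but it is part of the verification you promised. (3) Your fallback density argument fails at the endpoints: $\cS(\rd)$ is not dense in $M^{p,q}(\rd)$ or $W(\cF L^p,L^q)(\rd)$ when $p=\infty$ or $q=\infty$, so for the full range $1\leq p,q\leq\infty$ you must rely on your first route — the identity holds pointwise for every tempered distribution, hence the norm equivalence holds on all of $\cS'(\rd)$, and surjectivity follows because $\cF$ is a bijection there and the inverse transform satisfies the analogous identity.
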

Consequently, convolution properties of modulation spaces can be
translated into point-wise multiplication properties of Wiener
amalgam spaces, as shown below.
\begin{proposition}\label{p3}
For every $1\leq
p,q\leq\infty$ we have
\[
\|fu\|_{W(\cF L^p,L^q)}\lesssim
\|f\|_{W(\cF
L^1,L^\infty)}\|u\|_{W(\cF L^p,L^q)}.
\]
If $p=q$, we have
\begin{equation}\label{ap}
\|fu\|_{M^p}\lesssim
\|f\|_{W(\cF
L^1,L^\infty)}\|u\|_{M^p}.
\end{equation}
\end{proposition}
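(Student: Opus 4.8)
The plan is to prove the first (more general) inequality directly from the definition \eqref{normwiener} of the Wiener amalgam norm, exploiting the fact that a pointwise product turns into a convolution under the Fourier transform, and then to deduce the case $p=q$ as a corollary. First I would fix a single window $g\in\coi(\rd)$ and use it to compute all three norms at once, observing that $g^2\in\coi(\rd)$ is again an admissible window; by the window-independence of the Wiener amalgam norms I may therefore measure $fu$ with $g^2$ and measure $f,u$ with $g$, at the cost of harmless constants absorbed by $\lesssim$. The algebraic heart of the argument is the identity
\[
V_{g^2}(fu)(z,\cdot)=\cF\big[(fu)\,T_z(g^2)\big]=\cF\big[(f\,T_zg)(u\,T_zg)\big]=V_gf(z,\cdot)\ast V_gu(z,\cdot),
\]
where the convolution is taken in the frequency variable; here I used $T_z(g^2)=(T_zg)^2$ together with the convolution theorem $\cF(\phi\psi)=\cF\phi\ast\cF\psi$ (which holds without constants in the chosen normalization of $\cF$).

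Next I would estimate the inner $\cF L^p$ norm, i.e. the $L^p$ norm in the frequency variable, by Young's inequality applied to the convolution above, with exponents satisfying $1+\tfrac{1}{p}=\tfrac{1}{1}+\tfrac{1}{p}$, obtaining
\[
\|V_{g^2}(fu)(z,\cdot)\|_{L^p}\leq \|V_gf(z,\cdot)\|_{L^1}\,\|V_gu(z,\cdot)\|_{L^p}.
\]
Taking the $L^q$ norm in $z$ of both sides and applying H\"older's inequality with the pair of exponents $(\infty,q)$ then yields
\[
\|fu\|_{\fpq}\leq \big\|\,\|V_gf(z,\cdot)\|_{L^1}\,\big\|_{L^\infty_z}\;\big\|\,\|V_gu(z,\cdot)\|_{L^p}\,\big\|_{L^q_z}=\|f\|_{\fui}\,\|u\|_{\fpq},
\]
which is exactly the first claimed inequality. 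For the case $p=q$ I would invoke the elementary identity $M^p=M^{p,p}=W(\cF L^p,L^p)$, with equivalent norms: both sides equal $\|V_gf\|_{L^p(\rdd)}$ by Fubini, since the order of integration is irrelevant when the two Lebesgue exponents coincide. Specializing the inequality just proved to $q=p$ and rewriting the $W(\cF L^p,L^p)$ norms as $M^p$ norms then gives \eqref{ap}. (Note that Proposition \ref{vm} is not actually needed here, although it offers the alternative reading of this estimate as a Young-type convolution relation $M^{1,\infty}\ast M^{p,q}\hookrightarrow M^{p,q}$ on the Fourier side.)

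I do not expect a serious obstacle: once the window is chosen to be $g^2$, so that the product is converted into a convolution of short-time Fourier transforms, the proof is essentially exponent bookkeeping. The two points that require care are the verification that $g^2$ is an admissible window (so that norm equivalence legitimately applies) and the correct matching of the Young and H\"older exponents. A minor technical remark is also needed to make sense of the product $fu$ as a tempered distribution in the first place; this is handled in the standard way by establishing the estimate for $f,u\in\cS(\rd)$, where everything is classical, and then extending by density, using that $\cS(\rd)$ is dense in the relevant spaces.
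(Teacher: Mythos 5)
Your proof is correct, but it takes a genuinely different route from the paper's. The paper disposes of Proposition \ref{p3} in two lines: by the Fourier isomorphism $\cF\colon M^{p,q}(\rd)\to W(\cF L^p,L^q)(\rd)$ of Proposition \ref{vm}, the product estimate is equivalent to the convolution relation $\|\hat f\ast \hat u\|_{M^{p,q}}\lesssim \|\hat f\|_{M^{1,\infty}}\|\hat u\|_{M^{p,q}}$, which is then quoted as a special case of a known result (Proposition 2.4 of Cordero--Gr\"ochenig). You instead prove the estimate from scratch: passing to the window $g^2$, using $T_z(g^2)=(T_zg)^2$ and the convolution theorem to obtain $V_{g^2}(fu)(z,\cdot)=V_gf(z,\cdot)\ast V_gu(z,\cdot)$, then applying Young's inequality ($L^1\ast L^p\subseteq L^p$) in the frequency variable and H\"older with exponents $(\infty,q)$ in $z$, and finally identifying $M^p=M^{p,p}=W(\cF L^p,L^p)$ by Fubini for the case $p=q$. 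This is essentially the standard proof of the very convolution relation the paper cites, so your argument makes the proposition self-contained and, as you observe, dispenses with Proposition \ref{vm} altogether; the paper's proof buys brevity by outsourcing the work to the literature, while yours makes the mechanism (product of STFTs becoming a convolution) visible.

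One caveat concerns your closing density remark: $\cS(\rd)$ is dense in $W(\cF L^p,L^q)(\rd)$ only when $p,q<\infty$, and it is \emph{never} norm-dense in $W(\cF L^1,L^\infty)(\rd)$, which is precisely where $f$ lives; so the extension from Schwartz functions cannot be carried out by norm density in the stated generality. The repair is standard and does not touch your main computation: once the product $fu$ has been given a meaning (which the statement itself requires --- locally one multiplies an element of $\cF L^1$ by an element of $\cF L^p$, and $\cF L^1\cdot \cF L^p\subseteq \cF L^p$ by Young on the Fourier side), the identity $V_{g^2}(fu)(z,\cdot)=V_gf(z,\cdot)\ast V_gu(z,\cdot)$ and the subsequent estimates hold verbatim for general $f$ and $u$, pointwise in $z$; alternatively one can approximate in the weak-$*$ sense rather than in norm. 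Since the paper glosses over the same point by citing the convolution relation, this is a minor blemish, not a gap in the core argument.
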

\begin{proof}
From Proposition \ref{vm}, the estimate to prove is equivalent to
\[
\|\hat{f}\ast
\hat{u}\|_{M^{p,q}}\lesssim
\|\hat{f}\|_{M^{1,\infty}}\|\hat{u}
\|_{M^{p,q}},
\]
but this a special case  of \cite[Proposition 2.4]{Cordero_2003_Time}.
\end{proof}

Modulation and Wiener amalgam spaces are invariant with respect to modulation and translation operators. Namely, from \cite[Theorem 11.3.5 ]{Grochenig_2001_Foundations} we infer

\begin{prop} For $1\leq p,q\leq \infty$, $M^{p,q}(\rd)$ is invariant under time-frequency shifts, with
	\begin{equation}\label{tfmod}
	\|T_x M_u f\|_{M^{p,q}}\asymp \|f\|_{M^{p,q}}.
	\end{equation}
\end{prop}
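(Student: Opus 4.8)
The plan is to reduce the statement to the covariance property of the short-time Fourier transform together with the translation invariance of the $L^p$ and $L^q$ norms; in fact this yields not merely $\asymp$ but equality of norms once the window $g$ is fixed. To avoid a clash with the second STFT variable, I write the frequency-shift parameter as $\eta$, so that the operator under consideration is $T_x M_\eta$.

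First I would compute the STFT of the shifted function directly from the definition $V_g f(z,u)=\int e^{-2\pi i u y}f(y)g(y-z)\,dy$. Writing $(T_x M_\eta f)(y)=e^{2\pi i \eta(y-x)}f(y-x)$ and performing the change of variables $t=y-x$ in the resulting integral produces the covariance identity
\[
V_g(T_x M_\eta f)(z,u)=e^{-2\pi i x u}\,V_g f(z-x,\,u-\eta).
\]
The only point to check here is the bookkeeping of the phase factors, and the unimodular prefactor $e^{-2\pi i x u}$ is harmless because we pass immediately to moduli:
\[
|V_g(T_x M_\eta f)(z,u)|=|V_g f(z-x,\,u-\eta)|.
\]

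Next I would substitute this identity into the definition $\|f\|_{M^{p,q}}=\|\,\|V_g f(\cdot,u)\|_{L^p}\,\|_{L^q_u}$. Evaluating the inner $L^p$-norm in the first variable $z$ and using the translation invariance of Lebesgue measure (the substitution $z\mapsto z+x$) gives $\|V_g(T_x M_\eta f)(\cdot,u)\|_{L^p}=\|V_g f(\cdot,u-\eta)\|_{L^p}$, i.e.\ the function $u\mapsto \|V_g f(\cdot,u)\|_{L^p}$ is merely translated by $\eta$. Taking the outer $L^q$-norm in $u$ and again invoking translation invariance then yields $\|T_x M_\eta f\|_{M^{p,q}}=\|f\|_{M^{p,q}}$, which is the desired conclusion (the statement is phrased with $\asymp$ because the $M^{p,q}$-norm depends on the choice of window up to equivalence).

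There is no genuine obstacle in this argument: the covariance formula is a routine change of variables, and the invariance of the $L^p$- and $L^q$-norms under translations is elementary. The only cases requiring a separate word are $p=\infty$ or $q=\infty$, where the integral norms are replaced by essential suprema; but the essential supremum is itself translation invariant, so the same two-step reduction applies verbatim.
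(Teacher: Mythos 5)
Your proof is correct and follows essentially the same route as the paper, which cites Gr\"ochenig's book but remarks that the result is a consequence of the STFT covariance property $|V_g(T_x M_u f)(y,\omega)|=|V_gf(y-x,\omega-u)|$ --- exactly the identity you derive and then combine with translation invariance of the $L^p$ and $L^q$ norms. Your additional observation that one in fact gets equality of norms for a fixed window (with $\asymp$ only reflecting the window-dependence of the norm) is a correct refinement.
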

The result is known for Wiener amalgam spaces as well.
\begin{prop} For $1\leq p,q\leq \infty$, $W(\cF L^p, L^q)(\rd)$ is invariant under time-frequency shifts, with
	\begin{equation}\label{tfwiener}
	\|T_x M_u f\|_{W(\cF L^p, L^q)}\asymp\|f\|_{W(\cF L^p, L^q)}.
	\end{equation}
\end{prop}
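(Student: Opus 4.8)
The plan is to transfer the already-established time-frequency invariance of the modulation spaces $M^{p,q}$ to the Wiener amalgam spaces $W(\cF L^p, L^q)$ by means of the Fourier transform isomorphism of Proposition \ref{vm}. Given $f \in W(\cF L^p, L^q)(\rd)$, Proposition \ref{vm} lets me write $f = \cF h$ with $h = \cF^{-1} f \in M^{p,q}(\rd)$ and $\|f\|_{W(\cF L^p, L^q)} \asymp \|h\|_{M^{p,q}}$. The whole proof then reduces to understanding how the time-frequency shift $T_x M_u$ is transformed after conjugation by $\cF^{-1}$.

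Using the intertwining formulas $(T_x f)\hat{}=M_{-x}\hat f$ and $(M_u f)\hat{}=T_u\hat f$ recorded in the Preliminaries, I would check that the Fourier transform conjugates a time-frequency shift into another time-frequency shift. Concretely, $\cF^{-1}(T_x M_u f)= M_x T_{-u}\cF^{-1}f = M_x T_{-u} h$ (this is verified in one line by applying $\cF$ to the right-hand side and invoking the two intertwining relations, which returns $T_x M_u f$). Combining this identity with Proposition \ref{vm} applied to $M_x T_{-u} h$, and then with the invariance \eqref{tfmod} of $M^{p,q}$ under time-frequency shifts, yields the chain
\[
\|T_x M_u f\|_{W(\cF L^p, L^q)} \asymp \|M_x T_{-u} h\|_{M^{p,q}} \asymp \|h\|_{M^{p,q}} \asymp \|f\|_{W(\cF L^p, L^q)},
\]
which is exactly \eqref{tfwiener}.

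The only delicate point is keeping the commutation relations between $\cF$ and the operators $T_x, M_u$ straight; but since the claim is stated with $\asymp$, any unimodular phase factor, sign, or swap of the roles of $x$ and $u$ produced along the way is harmless, precisely because modulation spaces are invariant under \emph{every} time-frequency shift with a comparable norm. Alternatively, the statement can be obtained directly from the definition \eqref{normwiener}: the covariance property $|V_g(T_x M_u f)(z,\omega)| = |V_g f(z-x,\omega-u)|$ of the STFT shows that a time-frequency shift merely translates the STFT in both of its variables, and since the mixed norm in \eqref{normwiener} integrates over all of $z$ and $\omega$, translation invariance of Lebesgue measure gives \eqref{tfwiener} at once (indeed with equality for a fixed window). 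I expect the Fourier-transfer route to be the cleanest here, as it reuses Proposition \ref{vm} and \eqref{tfmod} with essentially no extra computation.
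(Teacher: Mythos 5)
Your proposal is correct, and it actually contains two complete arguments, of which your ``alternative'' is precisely the paper's proof. The paper handles this proposition (and the preceding one for $M^{p,q}$) in a single stroke: by direct inspection of the STFT one has the covariance property $|V_g(T_x M_u f)(z,\omega)|=|V_gf(z-x,\omega-u)|$, and since the norm \eqref{normwiener} is a mixed Lebesgue norm over all of $z$ and $\omega$, translation invariance of Lebesgue measure gives \eqref{tfwiener} immediately (with equality for a fixed window, as you observe). Your preferred primary route is genuinely different: you conjugate the time-frequency shift through the Fourier transform via $\cF^{-1}(T_x M_u f)=M_x T_{-u}\cF^{-1}f$, then apply the isomorphism of Proposition \ref{vm} and the modulation-space invariance \eqref{tfmod}. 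This is sound --- the order swap $M_x T_{-u}=e^{-2\pi i ux}T_{-u}M_x$ costs only a unimodular constant, which is harmless as you note --- but it is less self-contained: it leans on Proposition \ref{vm} and on the previous proposition (itself cited from Gr\"ochenig's book), whereas the covariance argument proves both invariance propositions simultaneously from first principles, which is why the paper treats them together. In short: your transfer argument buys reuse of already-stated machinery with no computation; the paper's (your alternative) buys a one-line, window-level proof with equality of norms. Contrary to your closing expectation, the direct STFT route is the cleaner of the two here.
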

Indeed, both these results are a consequence of the covariance property of STFT, namely
\[
|V_g(T_x M_u f)(y,\omega)|=|V_gf(y-x,\omega-u)|,\quad x,y,u,\omega\in\rd,
\]
which follows from direct inspection. \par

\subsection{Preliminary results}
In the sequel we shall list  issues  preparing for  our later argumentation.
To study the properties of our phase function, we shall relay on the following results.
\begin{lemma} (\cite[Lemma 3.2]{Miyachi_2009_Estimates})\label{l1}  Let $\eps>0 $. Suppose $\mu$ is a real-valued function of class $\cC^{\lfloor d/2\rfloor+1}$ on $\R^d \setminus \{0\}$ satisfying \begin{equation}\label{l1est}
	 |\partial^{\alpha}\mu(u)|\leq C_{\alpha}|u|^{\eps - |\alpha|}
	\end{equation} 
	for $|\alpha|\leq \lfloor d/2\rfloor+1$. Then $\cF^{-1}[\eta e^{i  \mu}]\in L^1(\R^d)$ for each $\eta \in \cS(\R^d)$ with compact support. The norm of $\eta e^{i  \mu}$ in $\cF L^1(\R^d)$ is indeed controlled by a constant depending only on $d$, $\eta$ and the constants $C_\alpha$ in \eqref{l1est}.  
\end{lemma}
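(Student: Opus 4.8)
The plan is to estimate the $\cF L^1$-norm of $\eta e^{i\mu}$ by exploiting the fact that the Fourier algebra $\cF L^1(\rd) = W(\cF L^1, L^\infty)$ locally, together with the multiplicative structure $\cF L^1$ enjoys as a Banach algebra under pointwise multiplication. Since $\eta$ has compact support, I only need to control $e^{i\mu}$ on a fixed compact neighborhood of the origin (outside the support of $\eta$ nothing matters), and the trouble is concentrated entirely at $u=0$, where the estimate \eqref{l1est} only gives $|\mu(u)|\lesssim |u|^\eps$ and derivatives that blow up. First I would introduce a dyadic (Littlewood--Paley type) decomposition near the origin: fix $\chi\in\cC_0^\infty(\rd)$ supported in an annulus $\{1/2\leq |u|\leq 2\}$ with $\sum_{j\geq 0}\chi(2^j u)=1$ on a punctured neighborhood of $0$, and write
\[
\eta(u) e^{i\mu(u)}=\sum_{j\geq 0}\eta(u)\chi(2^j u)e^{i\mu(u)}=\sum_{j\geq 0}\psi_j(u),
\]
absorbing the (smooth, away from zero) leftover into the harmless part.

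The key step is a uniform rescaling estimate on each dyadic piece. On the support of $\chi(2^j\cdot)$ we have $|u|\asymp 2^{-j}$, so I would set $v=2^j u$ and study $\tilde\mu_j(v):=\mu(2^{-j}v)$ on the fixed annulus $\{1/2\leq |v|\leq 2\}$. From \eqref{l1est}, for $|\alpha|\leq \lfloor d/2\rfloor+1$,
\[
|\partial^\alpha_v \tilde\mu_j(v)| = 2^{-j|\alpha|}|(\partial^\alpha\mu)(2^{-j}v)|\lesssim 2^{-j|\alpha|}(2^{-j})^{\eps-|\alpha|}=2^{-j\eps},
\]
so all derivatives of $\tilde\mu_j$ up to order $\lfloor d/2\rfloor+1$ are bounded by $C\,2^{-j\eps}$ uniformly in $j$. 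Hence $e^{i\tilde\mu_j}$ differs from $1$ by a quantity that is $O(2^{-j\eps})$ in $\cC^{\lfloor d/2\rfloor+1}$ of the annulus, and I would estimate the $\cF L^1$-norm (equivalently the $M^{1,\infty}$-norm of its Fourier transform, or directly an $L^1$ bound on the inverse Fourier transform) of $\eta(2^{-j}v)\chi(v)e^{i\tilde\mu_j(v)}$ by a Sobolev-type embedding $H^{\lfloor d/2\rfloor+1}\hookrightarrow \cF L^1$ on the fixed compact annulus: this costs $\|\cdot\|_{H^{\lfloor d/2\rfloor+1}}$, which is $O(1)$ uniformly in $j$ because all the derivatives are bounded (the phase contributes a uniformly bounded factor). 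Finally I would undo the scaling: since $\cF L^1$-norm is dilation-invariant (the $L^1$-norm of the inverse transform is scale-invariant, $\|\widehat{f(\lambda\cdot)}\,\widehat{\phantom{x}}\|_{L^1}=\|\hat f\|_{L^1}$ in the appropriate normalization), the $\cF L^1$-norm of $\psi_j$ equals that of its rescaled version and is therefore $O(1)$ in $j$ — but this alone only gives boundedness of each term, not summability.

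To obtain summability of $\sum_j \|\psi_j\|_{\cF L^1}$ I would refine the previous bound: writing $e^{i\tilde\mu_j}=1+(e^{i\tilde\mu_j}-1)$, the constant $1$ contributes $\eta(2^{-j}v)\chi(v)$, whose pieces reassemble into the smooth localization of $\eta$ and hence into a single $\cF L^1$ function, while the genuinely oscillatory remainder $e^{i\tilde\mu_j}-1$ carries a gain, namely $\|e^{i\tilde\mu_j}-1\|_{\cC^{\lfloor d/2\rfloor+1}}\lesssim 2^{-j\eps}$ by the derivative bounds above (each derivative of $e^{i\tilde\mu_j}-1$ is controlled by a polynomial in the derivatives of $\tilde\mu_j$, all of which are $O(2^{-j\eps})$). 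Thus the oscillatory part of each $\psi_j$ has $\cF L^1$-norm $\lesssim 2^{-j\eps}$, and $\sum_{j\geq 0} 2^{-j\eps}<\infty$. Assembling the constant parts and the geometrically decaying oscillatory parts yields $\eta e^{i\mu}\in\cF L^1$ with the claimed dependence of the norm only on $d$, $\eta$, and the constants $C_\alpha$. The main obstacle is this last summability point: the naive per-piece estimate is only uniformly bounded, and one genuinely needs to separate the non-oscillatory part (which telescopes back into $\eta$) from the oscillatory part (which decays like $2^{-j\eps}$) to close the sum — this is exactly where the positivity of $\eps$ enters and is the heart of the lemma.
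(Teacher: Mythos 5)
The paper never proves this lemma: it is imported verbatim as \cite[Lemma 3.2]{Miyachi\_2009\_Estimates}, so there is no internal proof to compare against, and your argument has to stand on its own. It does: what you wrote is essentially the classical proof of results of this type, and it is correct. The three ingredients --- the dyadic decomposition $\sum_{j\geq 0}\chi(2^j u)=\theta(u)$ near the singularity, the exact dilation invariance of the $\cF L^1$ norm, and the rescaled derivative bounds $|\partial^\alpha_v \mu(2^{-j}v)|\lesssim 2^{-j\eps}$ on the fixed annulus --- are the right ones, and you correctly isolate the heart of the matter: the per-piece bound is only $O(1)$, so one must split $e^{i\tilde\mu_j}=1+(e^{i\tilde\mu_j}-1)$, letting the constant parts telescope back to $\eta\theta\in\cF L^1$ while the oscillatory parts gain the factor $2^{-j\eps}$ (via Fa\`a di Bruno and the embedding $H^{s}\hookrightarrow\cF L^1$, $s>d/2$, applied with $s=\lfloor d/2\rfloor+1$), summable precisely because $\eps>0$. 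Three details to tighten in a written version, none of which is a genuine gap: (1) the embedding $H^{s}\hookrightarrow\cF L^1$ for $s>d/2$ is global (Cauchy--Schwarz against $(1+|\xi|)^{-s}\in L^2$), so no ``local'' version is needed; what compact support actually buys is the bound of the $H^{s}$ norm by the $\cC^{\lfloor d/2\rfloor+1}$ norm; (2) your leftover piece $\eta(1-\theta)e^{i\mu}$ is not smooth, since $\mu$ is only $\cC^{\lfloor d/2\rfloor+1}$ away from the origin, but the same Sobolev embedding handles it using \eqref{l1est} on the compact set $\supp \eta\cap\supp(1-\theta)$; (3) to identify the sum of the absolutely convergent series with $\eta\theta(e^{i\mu}-1)$, note that $\|f\|_{\infty}\leq\|f\|_{\cF L^1}$, so $\cF L^1$ convergence implies uniform convergence and the limit agrees with the pointwise one. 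With these observations your proof delivers exactly the stated quantitative bound, with constant depending only on $d$, $\eta$ and the $C_\alpha$ (and on the fixed $\eps$ through $\sum_j 2^{-j\eps}$).
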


\begin{lemma}(\cite[Theorem 5]{Benyi_2007_Unimodular}) \label{l2}
For $d\geq 1$, let $l= \lfloor d/2\rfloor+1$. Assume that $\mu$ is $2l$-times continuously differentiable function on $\rd$ and $\|\partial^{\a}\mu\|_{L^{\infty}} \leq C_{\a}$, for $2\leq |\a| \leq 2l$, and some constants $C_{\a}$. Then $e^{i \mu}\in W(\cF L^1, L^{\infty})(\rd)$. \par The norm of $e^{i  \mu}$ in $W(\cF L^1, L^{\infty})(\rd)$ is indeed controlled by a constant depending only on $d$, and the above constants $C_\alpha$.  
\end{lemma}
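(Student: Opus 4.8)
The plan is to use the square-function--free description of the Wiener amalgam norm supplied by \eqref{normwiener}: fixing a window $g\in\cC_0^\infty(\rd)$, the choice $p=1$, $q=\infty$ gives
\[
\norm{e^{i\mu}}_{W(\cF L^1, L^\infty)}=\sup_{z\in\rd}\norm{V_g(e^{i\mu})(z,\cdot)}_{L^1},
\]
so it suffices to bound $\norm{V_g(e^{i\mu})(z,\cdot)}_{L^1}$ uniformly in $z$. The essential difficulty is that the hypothesis controls only the derivatives $\partial^\alpha\mu$ with $2\le|\alpha|\le 2l$, while $\mu$ itself and its gradient $\nabla\mu$ may grow without bound. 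First I would therefore strip off the low-order part of the phase by a second-order Taylor expansion centred at the window's position $z$: writing $\mu(y)=\mu(z)+\nabla\mu(z)\cdot(y-z)+\psi_z(y)$, the constant $\mu(z)$ contributes a unimodular factor and the linear term $\nabla\mu(z)\cdot(y-z)$ is a pure modulation, which merely translates $V_g(e^{i\mu})(z,\cdot)$ in the frequency variable and hence leaves the $L^1$ norm unchanged (this is the concrete manifestation of the invariance \eqref{tfwiener}). Explicitly,
\[
\abs{V_g(e^{i\mu})(z,\xi)}=\Big|\,\widehat{h_z}\big(\xi-\tfrac{1}{2\pi}\nabla\mu(z)\big)\Big|,\qquad h_z(y):=e^{i\psi_z(y)}\,g(y-z),
\]
so that $\norm{V_g(e^{i\mu})(z,\cdot)}_{L^1}=\norm{\widehat{h_z}}_{L^1}$.

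The gain is that the remainder satisfies $\psi_z(z)=0$, $\nabla\psi_z(z)=0$ and $\partial^\alpha\psi_z=\partial^\alpha\mu$ for $|\alpha|\ge2$. On the fixed-size support of $g(\cdot-z)$ this forces, via Taylor's theorem and the bounds $\norm{\partial^\alpha\mu}_{L^\infty}\le C_\alpha$ for $2\le|\alpha|\le 2l$, the uniform estimates $\abs{\psi_z(y)}\lesssim\abs{y-z}^2$ and $\abs{\nabla\psi_z(y)}\lesssim\abs{y-z}$, together with $\abs{\partial^\alpha\psi_z(y)}\le C_\alpha$ for $2\le|\alpha|\le 2l$; hence every derivative $\partial^\alpha\psi_z$ with $\abs{\alpha}\le 2l$ is bounded on $\supp g(\cdot-z)$ by a constant independent of $z$. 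Next I would differentiate $h_z$: by the Leibniz rule together with the Fa\`a di Bruno formula, $\partial^\beta(e^{i\psi_z})$ equals $e^{i\psi_z}$ times a universal polynomial in the $\partial^\delta\psi_z$ with $1\le|\delta|\le|\beta|$, so for $\abs{\beta}\le 2l$ one gets $\norm{\partial^\beta h_z}_{L^\infty}\le C$ and, since $h_z$ is supported in a ball of fixed radius, $\norm{\partial^\beta h_z}_{L^1}\le C$ uniformly in $z$.

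Finally I would convert these $L^1$ bounds on the derivatives of $h_z$ into pointwise decay of $\widehat{h_z}$: from $\abs{\xi^\beta\,\widehat{h_z}(\xi)}=(2\pi)^{-\abs{\beta}}\,\abs{\widehat{\partial^\beta h_z}(\xi)}\le(2\pi)^{-\abs{\beta}}\,\norm{\partial^\beta h_z}_{L^1}$, summed over $\abs{\beta}\le 2l$ and using $\sum_{|\beta|\le 2l}\abs{\xi^\beta}\asymp(1+\abs{\xi})^{2l}$, one obtains $\abs{\widehat{h_z}(\xi)}\le C(1+\abs{\xi})^{-2l}$ with $C$ depending only on $d$, $g$ and the $C_\alpha$. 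Integrating,
\[
\norm{V_g(e^{i\mu})(z,\cdot)}_{L^1}=\norm{\widehat{h_z}}_{L^1}\le C\int_{\rd}(1+\abs{\xi})^{-2l}\,d\xi,
\]
and the integral is finite precisely because $2l=2(\lfloor d/2\rfloor+1)\ge d+1>d$; this is exactly where the choice $l=\lfloor d/2\rfloor+1$ and the requirement of $2l$ bounded derivatives enter. Taking the supremum over $z$ yields the membership $e^{i\mu}\in W(\cF L^1,L^\infty)(\rd)$ and the asserted control of the norm. The main obstacle is conceptual rather than computational: recognising that the uncontrolled low-order behaviour of $\mu$ is harmless, because it is encoded in a unimodular constant and a modulation that are invisible to the $W(\cF L^1,L^\infty)$ norm, so that only after this localized reduction do the second- and higher-order bounds on the window's support suffice.
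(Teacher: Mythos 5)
Your proof is correct and complete. Be aware, though, that the paper contains no proof of this lemma at all: it is imported verbatim, with citation, from B\'enyi--Gr\"ochenig--Okoudjou--Rogers, so there is no internal argument to compare against; what you have written is a self-contained proof of the quoted result, and it follows the same strategy as the cited source. The crucial idea, which you identify and execute correctly, is that the norm $\sup_{z}\norm{V_g(e^{i\mu})(z,\cdot)}_{L^1}$ (with a fixed compactly supported window) is blind to constant and linear phases: Taylor-expanding $\mu$ at the window's position $z$ turns $\mu(z)$ into a unimodular constant and $\nabla\mu(z)\cdot(y-z)$ into a frequency translation, leaving a remainder $\psi_z$ whose derivatives of \emph{all} orders $\leq 2l$ are bounded on $\supp g(\cdot-z)$ uniformly in $z$, precisely because the uncontrolled order-$0$ and order-$1$ derivatives of $\mu$ have been cancelled. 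Your finishing step --- Leibniz/Fa\`a di Bruno bounds on $\partial^\beta h_z$ for $\abs{\beta}\leq 2l$, integration by parts to get $\abs{\widehat{h_z}(\xi)}\lesssim(1+\abs{\xi})^{-2l}$, and integrability since $2l=2(\lfloor d/2\rfloor+1)\geq d+1>d$ --- is one of the two standard ways to conclude; the other is a weighted Cauchy--Schwarz argument (the embedding of $H^{2l}$ into $\cF L^1$ for functions supported in a fixed ball), which uses the same $2l$ derivatives and the same threshold $2l>d$, so the two variants are interchangeable. Two minor points worth stating explicitly if you polish this up: only the lower bound in $\sum_{\abs{\beta}\leq 2l}\abs{\xi^\beta}\gtrsim(1+\abs{\xi})^{2l}$ is needed (and it holds with purely dimensional constants), and the uniformity in $z$ of all your estimates is what licenses taking the supremum at the end, with the resulting constant depending only on $d$, the fixed window $g$, and the $C_\alpha$, as the statement requires.
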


Dilation properties for Wiener amalgam spaces will play a key role in the proof of our main results.
\begin{lemma}(\cite[Corollary 3.2]{Cordero_2008_Metaplectic})\label{l3}
Let $1 \leq p,q \leq \infty$ and $\lambda\geq 1$. Then, for every $f \in W(\cF L^p, L^q)(\bR^d)$, 
$$
\|f_\lambda\|_{W(\cF L^p, L^q)} \lesssim \lambda^{d/p-d/q} \|f\|_{W(\cF L^p, L^q)},
$$
where $f_\lambda(x)=f(\lambda x)$.
\end{lemma}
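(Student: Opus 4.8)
The plan is to work directly with the STFT characterization \eqref{normwiener} of the Wiener amalgam norm, fix a Gaussian window $g$, and reduce the dilation estimate to a single change-of-window inequality whose constant I track explicitly in $\lambda$. The advantage of this route is that dilation acts transparently on the STFT, at the cost of replacing $g$ by a dilated window; the whole difficulty is then concentrated in one scalar quantity.

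First I would record how the STFT transforms under dilation. Writing $g_{1/\lambda}(s):=g(s/\lambda)$ and changing variables $t=\lambda y$ in the integral defining $V_g f_\lambda$, one finds
\[
V_g f_\lambda(z,u)=\lambda^{-d}\,V_{g_{1/\lambda}}f(\lambda z,u/\lambda).
\]
Substituting this into \eqref{normwiener} and performing the changes of variables $v=u/\lambda$ in the inner $L^p$-integral (over frequency) and $w=\lambda z$ in the outer $L^q$-integral (over position), the Jacobian factors combine to give
\[
\|f_\lambda\|_{W(\cF L^p,L^q)}=\lambda^{-d+d/p-d/q}\,\|V_{g_{1/\lambda}}f\|_{L^{p,q}},
\]
where the right-hand norm is exactly the $W(\cF L^p,L^q)$-norm of $f$ computed with the \emph{dilated} window $g_{1/\lambda}$ in place of $g$.

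The remaining task is to compare the norm computed with $g_{1/\lambda}$ to the one computed with $g$. Here I would invoke the standard change-of-window inequality: using the pointwise bound $|V_{g_{1/\lambda}}f|\le |\langle g_{1/\lambda},g\rangle|^{-1}\,(|V_g f|*|V_{g_{1/\lambda}}g|)$ on $\rdd$ together with Young's inequality for mixed-norm spaces, one obtains
\[
\|V_{g_{1/\lambda}}f\|_{L^{p,q}}\lesssim \frac{\|V_{g_{1/\lambda}}g\|_{L^1(\rdd)}}{|\langle g_{1/\lambda},g\rangle|}\,\|f\|_{W(\cF L^p,L^q)}.
\]
Taking $g$ Gaussian, the inner product $\langle g_{1/\lambda},g\rangle$ is bounded below uniformly for $\lambda\ge 1$, so everything hinges on the growth of the cross term $\|V_{g_{1/\lambda}}g\|_{L^1}$.

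The crux, and the step I expect to be the main obstacle, is the sharp bound $\|V_{g_{1/\lambda}}g\|_{L^1(\rdd)}\lesssim \lambda^d$ for $\lambda\ge 1$. For Gaussian $g$ the cross-STFT $V_{g_{1/\lambda}}g$ is itself an explicit Gaussian on phase space, and I would estimate its $L^1$ norm either by direct computation or, more transparently, by combining the Moyal identity $\|V_{g_{1/\lambda}}g\|_{L^2}=\|g_{1/\lambda}\|_{L^2}\|g\|_{L^2}\asymp\lambda^{d/2}$ with the observation that this Gaussian is concentrated on a phase-space region of volume $\asymp\lambda^d$ (spread $\asymp\lambda$ in position, $\asymp 1$ in frequency), so that $\|\cdot\|_{L^1}\lesssim(\mathrm{vol})^{1/2}\|\cdot\|_{L^2}\lesssim \lambda^{d/2}\cdot\lambda^{d/2}=\lambda^d$. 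Inserting this into the previous displays yields $\|f_\lambda\|_{W(\cF L^p,L^q)}\lesssim \lambda^{-d+d/p-d/q}\cdot\lambda^d\,\|f\|_{W(\cF L^p,L^q)}=\lambda^{d/p-d/q}\|f\|_{W(\cF L^p,L^q)}$, which is the claim; the hypothesis $\lambda\ge 1$ is used precisely to keep the window-change constant under control in this regime.
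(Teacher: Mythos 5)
The paper itself contains no proof of this lemma: it is imported verbatim from the cited reference, so there is nothing internal to compare against. Judged on its own merits, your argument is correct in all essentials and is, in substance, the standard proof of such dilation estimates. The covariance formula $V_g f_\lambda(z,u)=\lambda^{-d}V_{g_{1/\lambda}}f(\lambda z,u/\lambda)$ is right; the two changes of variables in \eqref{normwiener} do produce the exact factor $\lambda^{-d+d/p-d/q}$ (also when $p$ or $q$ is $\infty$); mixed-norm Young's inequality applies as you use it; and the whole estimate does reduce to the single bound $\|V_{g_{1/\lambda}}g\|_{L^1(\rdd)}\lesssim\lambda^d$, which is precisely where $\lambda\geq 1$ enters.

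Two points need repair, though neither wrecks the strategy. First, your change-of-window inequality conflates two valid statements. The standard inequality reads $|V_hf|\leq|\langle\gamma,g\rangle|^{-1}\,|V_gf|\ast|V_h\gamma|$ for any auxiliary window $\gamma$ with $\langle\gamma,g\rangle\neq 0$. Taking $h=g_{1/\lambda}$ and $\gamma=g$ gives your cross term $|V_{g_{1/\lambda}}g|$ but with constant $\|g\|_{L^2}^{-2}=1$; taking $\gamma=g_{1/\lambda}$ gives your constant $|\langle g_{1/\lambda},g\rangle|^{-1}$ but with cross term $|V_{g_{1/\lambda}}g_{1/\lambda}|$, whose $L^1$ norm also scales like $\lambda^d$ by covariance. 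Either choice closes the argument; just do not pair the constant of one with the cross term of the other. Second, the heuristic $\|F\|_{L^1}\lesssim(\mathrm{vol})^{1/2}\|F\|_{L^2}$ is not a valid inequality for functions without compact support, so that step cannot stand as written; replace it by the direct computation you offer as an alternative: for the normalized Gaussian one finds $|V_{g_{1/\lambda}}g(x,\omega)|=2^{d/2}a^{-d/2}e^{-\pi|x|^2/(\lambda^2+1)}e^{-\pi|\omega|^2/a}$ with $a=1+\lambda^{-2}$, hence $\|V_{g_{1/\lambda}}g\|_{L^1(\rdd)}=2^{d/2}(\lambda^2+1)^{d/2}\asymp\lambda^d$ and $\langle g_{1/\lambda},g\rangle\geq 1$ for $\lambda\geq 1$. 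Finally, since you compute all norms with a Gaussian rather than the $\cC_0^\infty$ window of Section \ref{sec1}, you should invoke the standard fact that any nonzero Schwartz window yields an equivalent $W(\cF L^p,L^q)$ norm; with these adjustments your proof is complete and self-contained, which is more than the paper provides for this statement.
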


\begin{lemma}(\cite[Proposition 2.5]{Cordero_2015_Propagation})\label{l4}
Let $h\in \cC^{\infty}(\bR^d \setminus \{0\})$ be positively homogeneous of degree $r>0$, i.e., $h(\lambda x)= \lambda^r h(x)$ for $x \neq 0$, $ \lambda >0$. Consider $\chi \in \cC^{\infty}_{0}(\bR^d)$ and set  $f=h \chi$. Then, for $\psi \in \cS(\bR^d)$, there exists a constant $C>0$ such that 
$$
|V_{\psi}f(x, u)| \leq C(1 + |u|)^{-r-d}, \,\,\text{for \,\,every    }\,x,u \in \bR^d.
$$
\end{lemma}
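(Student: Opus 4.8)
The plan is to reduce the estimate on the STFT to a pointwise decay estimate for the ordinary Fourier transform $\hat f$, and then to obtain the latter by a dyadic decomposition around the singularity combined with the homogeneity of $h$.

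First I would exploit the identity $V_\psi f(x,u)=\cF[f\,T_x\psi](u)=(\hat f\ast M_{-x}\hat\psi)(u)$, which follows from $\cF[T_x\psi]=M_{-x}\hat\psi$. Since $|M_{-x}\hat\psi|=|\hat\psi|$ and $\psi\in\cS(\rd)$ has rapidly decreasing Fourier transform, taking absolute values gives $|V_\psi f(x,u)|\le(|\hat f|\ast|\hat\psi|)(u)$, a bound that is already uniform in $x$. Thus the whole problem reduces to proving the pointwise estimate $|\hat f(u)|\lesssim(1+|u|)^{-r-d}$: once this is available, Peetre's inequality and the Schwartz decay of $\hat\psi$ yield $\int(1+|u-v|)^{-r-d}|\hat\psi(v)|\,dv\lesssim(1+|u|)^{-r-d}$, which is exactly the claimed bound. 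Note that $f=h\chi$ is bounded and compactly supported (here $r>0$ makes $h$ continuous at the origin, with $h(0)=0$), so $\hat f$ is a bounded continuous function and the range of small $|u|$ is trivial.

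The core is the decay of $\hat f$. Fix $\phi\in\cC_0^\infty(\rd)$ supported in the annulus $\{1/2\le|y|\le2\}$ with $\sum_{j\in\ZZ}\phi(2^j y)=1$ for $y\ne0$, and write $f=f_{-1}+\sum_{j\ge0}f_j$, where $f_j=f\,\phi(2^j\cdot)$ is supported in $|y|\sim2^{-j}$ and $f_{-1}=f\,(1-\sum_{j\ge0}\phi(2^j\cdot))$ is smooth with compact support away from the origin, so that $\hat f_{-1}$ is rapidly decreasing. For the pieces near the origin I would rescale: with $y=2^{-j}z$ one gets $\hat f_j(u)=2^{-jd}\hat{\tilde f_j}(2^{-j}u)$, where $\tilde f_j(z)=f(2^{-j}z)\phi(z)$. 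By homogeneity $h(2^{-j}z)=2^{-jr}h(z)$, hence $\tilde f_j=2^{-jr}g_j$ with $g_j(z)=h(z)\phi(z)\chi(2^{-j}z)$. On the fixed annulus $\mathrm{supp}\,\phi$, where $h$ is smooth, the factor $\chi(2^{-j}z)$ has all derivatives bounded uniformly in $j$, so the $g_j$ are supported in a common annulus with $C^k$-norms bounded uniformly in $j$; consequently $|\hat g_j(w)|\le C_N(1+|w|)^{-N}$ uniformly in $j$, for every $N$. This gives the key per-scale bound $|\hat f_j(u)|\le C_N\,2^{-j(d+r)}(1+2^{-j}|u|)^{-N}$.

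It remains to sum over $j$, and this is the step where the sharp exponent is produced (and the main technical obstacle). Choosing $N>d+r$ and splitting the series at the scale $j_0\sim\log_2|u|$ where $2^{-j_0}|u|\sim1$: for $j\le j_0$ the factor $(1+2^{-j}|u|)^{-N}\sim(2^{-j}|u|)^{-N}$ turns the terms into an increasing geometric series dominated by its endpoint $\sim|u|^{-d-r}$, while for $j>j_0$ the factor is $\sim1$ and the terms $2^{-j(d+r)}$ form a decreasing geometric series again dominated by $\sim|u|^{-d-r}$. Hence $\sum_{j\ge0}|\hat f_j(u)|\lesssim|u|^{-d-r}$ for large $|u|$, and together with the rapid decay of $\hat f_{-1}$ this yields $|\hat f(u)|\lesssim(1+|u|)^{-r-d}$. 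The delicate point throughout is the uniformity in $j$ of the smoothness estimates after rescaling, which is what allows a single constant $C_N$ to survive the summation; everything else is bookkeeping with geometric series.
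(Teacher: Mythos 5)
Your proof is correct; note, though, that there is no internal proof to compare it against, since the paper imports Lemma \ref{l4} verbatim from \cite[Proposition 2.5]{Cordero_2015_Propagation} without reproducing the argument. Taken on its own, your argument is sound and self-contained. The identity $V_\psi f(x,u)=\cF(f\,T_x\psi)(u)=(\hat f\ast M_{-x}\hat\psi)(u)$ is legitimate here ($f$ is bounded with compact support, $T_x\psi\in\cS$), and it eliminates the $x$-dependence at no cost; the Peetre inequality then shows that convolution with the rapidly decaying $|\hat\psi|$ preserves the weight $(1+|u|)^{-r-d}$, so everything rests on the pointwise bound $|\hat f(u)|\lesssim (1+|u|)^{-r-d}$. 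Your dyadic argument does prove this: the rescaled blocks $g_j(z)=h(z)\phi(z)\chi(2^{-j}z)$ live in a fixed annulus where $h$ is smooth, their $C^N$ norms are uniform in $j\ge 0$ because derivatives of $\chi(2^{-j}\cdot)$ only produce harmless factors $2^{-j|\alpha|}\le 1$, hence $|\hat f_j(u)|\le C_N\,2^{-j(d+r)}(1+2^{-j}|u|)^{-N}$, and splitting the sum at the scale where $2^{-j}|u|\sim 1$, with $N>d+r$, yields $O(|u|^{-d-r})$ exactly as you say. Two details are worth one explicit line each in a written version: (i) $\|f_j\|_{L^1}\lesssim 2^{-j(r+d)}$, so the series $\sum_j f_j$ converges in $L^1$ and the Fourier transform may be taken term by term, giving $\hat f=\hat f_{-1}+\sum_{j\ge 0}\hat f_j$ pointwise; (ii) $f_{-1}$ vanishes on the punctured ball $0<|y|<1$ and also at the origin (because $f(0)=0$, which uses $r>0$), so it is genuinely a $\cC_0^\infty(\rd)$ function and $\hat f_{-1}$ decays rapidly, as you claim. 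With these remarks your argument is a complete proof of the lemma, of the Littlewood--Paley flavor one would expect for homogeneous singularities, and it would serve as a valid substitute for the external citation.
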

We recall the definition, for $k\in\bN$, of the Sobolev space 
$$ W^{k,1}(\bR^d )=\left\{u\in L^{1}(\bR^d):\partial^{\alpha }u\in L^{1}(\bR^d ),\,\,\forall |\alpha |\leq k\right\}$$
with the obvious norm. 
Sharp inclusion relations between Bessel potential Sobolev spaces and modulation spaces were proved in \cite{Wang2006,Kasso2004,Kobayashi_2011_inclusion,Sugimoto-Tomita2007,Toft1-2004}. Here we are interested in an  easier embedding of the Sobolev space $ W^{k,1}(\bR^d )$. We remark that for $p=1$ (and $p=\infty$) Sobolev and Bessel potential Sobolev spaces are different (cf. \cite[pag. 160]{Stein70}).
\begin{lemma}\label{l5}
We have
 $$W^{k,1}(\bR^d) \hookrightarrow M^{1}(\bR^d) \quad\text{for}\  k\geq d+1.$$
\end{lemma}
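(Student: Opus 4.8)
The plan is to work directly with the Wiener amalgam description $M^1(\rd)=W(\cF L^1,L^1)(\rd)$ recalled above, for which, after fixing a window $g\in\cC_0^\infty(\rd)$ (the choice being irrelevant up to equivalent norms), one has $\|u\|_{M^1}=\int_{\rd}\|u\,T_z g\|_{\cF L^1}\,dz$. The whole embedding then rests on the classical local bound $W^{k,1}(\rd)\hookrightarrow \cF L^1(\rd)$ for $k\geq d+1$, which I would prove with explicit control of the constants and then integrate against the translation parameter $z$. Since $g$ has compact support, all of its derivatives lie in $L^1(\rd)$, which is what makes the constants below finite.

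First I would establish a pointwise Fourier bound for a generic $F\in W^{k,1}(\rd)$. From $\cF(\partial^\alpha F)(\xi)=(2\pi i\xi)^\alpha\cF F(\xi)$ and $\|\cF(\partial^\alpha F)\|_{L^\infty}\le\|\partial^\alpha F\|_{L^1}$, using $\sum_{|\alpha|\le k}|\xi^\alpha|\asymp(1+|\xi|)^k$, I obtain
\[
(1+|\xi|)^k\,|\cF F(\xi)|\lesssim\sum_{|\alpha|\le k}\|\partial^\alpha F\|_{L^1}.
\]
Multiplying by $(1+|\xi|)^{-k}$ and integrating over $\xi\in\rd$ gives $\|F\|_{\cF L^1}\lesssim\sum_{|\alpha|\le k}\|\partial^\alpha F\|_{L^1}$, the integral $\int_{\rd}(1+|\xi|)^{-k}\,d\xi$ being finite \emph{exactly} because $k\ge d+1>d$. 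Here I use $\|F\|_{\cF L^1}=\|\cF^{-1}F\|_{L^1}$ together with $|\cF^{-1}F(\xi)|=|\cF F(-\xi)|$, so that the two integrals coincide and the estimate may be read off $\cF F$ directly.

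Next I would apply this with $F=u\,T_z g$ and integrate in $z$. By Leibniz,
\[
\partial^\alpha(u\,T_z g)=\sum_{\beta\le\alpha}\binom{\alpha}{\beta}\,\partial^\beta u\,\cdot\,T_z(\partial^{\alpha-\beta}g),
\]
so the previous display yields
\[
\|u\,T_z g\|_{\cF L^1}\lesssim\sum_{|\alpha|\le k}\sum_{\beta\le\alpha}\binom{\alpha}{\beta}\int_{\rd}|\partial^\beta u(y)|\,|(\partial^{\alpha-\beta}g)(y-z)|\,dy.
\]
Integrating in $z$ and invoking Tonelli together with the identity $\int_{\rd}|(\partial^{\alpha-\beta}g)(y-z)|\,dz=\|\partial^{\alpha-\beta}g\|_{L^1}$, which is independent of $y$, I get
\[
\int_{\rd}\|u\,T_z g\|_{\cF L^1}\,dz\lesssim\sum_{|\alpha|\le k}\sum_{\beta\le\alpha}\binom{\alpha}{\beta}\,\|\partial^\beta u\|_{L^1}\,\|\partial^{\alpha-\beta}g\|_{L^1}\lesssim\|u\|_{W^{k,1}},
\]
the implied constant depending only on $d$, $k$ and the fixed window $g$. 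This is precisely $\|u\|_{M^1}\lesssim\|u\|_{W^{k,1}}$.

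No step is genuinely hard: the argument is purely a priori, and every manipulation is legitimate because $\partial^\alpha u\in L^1$ for $|\alpha|\le k$ makes the relevant Fourier transforms bounded and continuous. The one point to watch, and the only place the hypothesis is used, is that the threshold $k\ge d+1$ is forced by the integrability of $(1+|\xi|)^{-k}$ over $\rd$; this is where the loss of $d+1$ derivatives originates and cannot be improved by this method. One could equally discretize, replacing $\int dz$ by a sum over $z\in\zd$ against a smooth partition of unity, reaching the same conclusion; the continuous version above seems cleanest given the definition of $W(\cF L^1,L^1)$ adopted in the paper.
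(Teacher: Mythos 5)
Your proof is correct and follows essentially the same route as the paper's: first the local embedding $W^{k,1}(\rd)\hookrightarrow \cF L^1(\rd)$ obtained from the integration-by-parts bound $|\xi^\alpha|\,|\cF F(\xi)|\lesssim \|\partial^\alpha F\|_{L^1}$ (the paper uses the integrability of $\min_{|\alpha|\leq d+1}|\xi^\alpha|^{-1}$ where you use $\sum_{|\alpha|\leq k}|\xi^\alpha|\asymp(1+|\xi|)^k$, a purely cosmetic difference), and then the Leibniz rule applied to $u\,T_zg$ followed by integration in $z$. The only added value of your write-up is that it makes the Tonelli step and the dependence of the constants on $g$ explicit, which the paper leaves implicit.
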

\begin{proof}
First of all we observe that the following embedding holds:
\begin{equation}\label{wk1}
W^{k,1}(\rd)\hookrightarrow \cF L^1(\rd), \quad k\geq d+1.
\end{equation}
This follows by an integration by parts, namely, for $f\in\cS(\rd)$ and every $\alpha$,
\[
|\Fur f(u)|\leq (2\pi)^{-|\alpha|} |u^\alpha|^{-1}\|\partial^\alpha f\|_{L^1(\rd)}.
\]
Taking the minimum with respect to $|\alpha|\leq d+1$ and using the fact that $\min_{|\alpha|\leq d+1} |u^\alpha|^{-1}$ is in $L^1(\rd)$ (see e.g.\ \cite[page 321]{Grochenig_2001_Foundations}) we obtain \eqref{wk1}.\par
We can then write
\[
\|\Fur (f T_xg)\|_{L^1}\lesssim \sum_{|\alpha|\leq d+1}\|\partial^\alpha(f T_x g)\|_{L^1}.
\]
Using Leibniz' rule and integrating with respect to $x\in\rd$ gives
\[
\|f\|_{M^1}\lesssim \|f\|_{W^{k,1}(\rd)}, \quad k\geq d+1.
\]
\end{proof}

In order to exhibit the counterexample anticipated in the introduction we will make use of a result proved in \cite[Proposition 6.1]{GobalFIO} which can be stated as follows.
\begin{proposition}\label{casolp}
	Let $\tilde{\f}:\R\to\R$ be any nonlinear smooth diffeomorphism 
	satisfying 
	\begin{equation}\label{bound}\tilde{\f}(u)=u,\quad \mbox{for}
	\,\,\,|u|\geq 1,
	\end{equation}
	and let 
	$\Phi\in \cC^\infty_0(\rd)$, $\Phi\equiv1$ on $[-1,1]^d$.\par
	For $2\leq p\leq\infty$,
	$m< d(1/2-1/p)$, 	
	the so-called type I FIO $T_{I,\f,\sigma}$, defined as
	\begin{equation}\label{FIO}
T_{I,\f,\sigma}f(x)=\int e^{2\pi
	i\f(x,u)}
\sigma(x,u)\hat{f}(u)\,du,
\end{equation}
	having phase
	$\f(x,u)=\sum_{k=1}^d\tilde{\f}(u_k)x_k$, and
	symbol $\sigma(x,u)=\langle x\rangle^m\Phi(u)$,
	 does not
	extend to a bounded operator on $L^p(\rd)$.
\end{proposition}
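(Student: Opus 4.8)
The plan is to deduce the failure of $L^p$-boundedness from a one-dimensional, quantitative form of the Beurling--Helson theorem for the chirp $e^{2\pi i x\widetilde{\varphi}(u)}$, and to read off the loss exponent $d(1/2-1/p)$ from it. First I would exploit the product structure of the phase $\f(x,u)=\sum_{k=1}^d\widetilde{\varphi}(u_k)x_k$ together with the normalisation $\Phi\equiv1$ on $[-1,1]^d$: the Schwartz kernel of $T_{I,\f,\sigma}$ is $k(x,t)=\int_{\rd}\Phi(u)e^{2\pi i(x\cdot\widetilde{\varphi}(u)-u\cdot t)}\,du$, so that $T_{I,\f,\sigma}f(x)=\langle x\rangle^m\int k(x,t)f(t)\,dt$, and $\int_{\rd}|k(x,t)|\,dt=\|\Phi\,e^{2\pi i x\cdot\widetilde{\varphi}}\|_{\mathcal{F}L^1(\rd)}$. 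Since $\widetilde{\varphi}$ is a nonlinear smooth diffeomorphism that equals the identity for $|u|\ge1$, it cannot be affine on $(-1,1)$ (matching all derivatives at $u=\pm1$ would force $\widetilde{\varphi}(u)=u$ everywhere); hence there is $u_0\in(-1,1)$ with $\widetilde{\varphi}''(u_0)\neq0$, and I would localise all test objects in a neighbourhood of the point $(u_0,\dots,u_0)$, where $\Phi\equiv1$ and the phase factorises.

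The core estimate is the lower bound $\|\chi\,e^{2\pi i x\widetilde{\varphi}}\|_{\mathcal{F}L^1(\R)}\gtrsim(1+|x|)^{1/2}$ for a bump $\chi$ supported near $u_0$ (the matching upper bound $\asymp$ follows from Lemma \ref{l2} and the dilation Lemma \ref{l3}). I would prove it by stationary phase applied to $\mathcal{F}[\chi e^{2\pi i x\widetilde{\varphi}}](t)=\int_{\R}\chi(u)e^{2\pi i(x\widetilde{\varphi}(u)-ut)}\,du$: for $|x|$ large the phase has a nondegenerate critical point $u_\ast$ precisely when $t/x\in\widetilde{\varphi}'(\supp\chi)$, i.e. for $t$ in an interval of length $\asymp|x|$, and there the integral has size $\asymp|x|^{-1/2}|\widetilde{\varphi}''(u_\ast)|^{-1/2}$; integrating over that interval gives the claimed $|x|^{1/2}$. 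Running the same stationary-phase computation in all $d$ variables near $(u_0,\dots,u_0)$ yields $\|\Phi\,e^{2\pi i x\cdot\widetilde{\varphi}}\|_{\mathcal{F}L^1(\rd)}\gtrsim\prod_{k=1}^d|x_k|^{1/2}$, the mass being confined to the box where the joint critical point lies in the region $\Phi\equiv1$.

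I would then convert this growth into unboundedness. At the endpoint $p=\infty$ it is immediate: testing on the unimodular function $f_{x_0}(t)=\overline{k(x_0,t)}/|k(x_0,t)|$ (with $\|f_{x_0}\|_{L^\infty}=1$) gives $|T_{I,\f,\sigma}f_{x_0}(x_0)|=\langle x_0\rangle^m\,\|\Phi\,e^{2\pi i x_0\cdot\widetilde{\varphi}}\|_{\mathcal{F}L^1}\gtrsim\langle x_0\rangle^m\prod_k|x_{0,k}|^{1/2}$, which along the diagonal $x_0=(t,\dots,t)$ grows like $t^{\,m+d/2}$; hence boundedness on $L^\infty$ fails as soon as the weight can no longer compensate the loss $d/2=d(1/2-1/\infty)$. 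At $p=2$ the change of variables $v=\widetilde{\varphi}(u)$ and Plancherel reduce the question to whether $\langle x\rangle^m$ is absorbable, isolating the threshold $0=d(1/2-1/2)$. For the intermediate range $2<p<\infty$ I would either interpolate between these endpoints or, more systematically, dualise: $L^p$-boundedness of $T_{I,\f,\sigma}$ is equivalent to $L^{p'}$-boundedness of its adjoint, a Fourier integral operator with the inverse phase and a frequency weight of order $m$, to which the sharp Seeger--Sogge--Stein/Miyachi--Peral loss $d|1/2-1/p|$ applies. In every case the verdict is governed by the single loss exponent $d(1/2-1/p)$.

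The step I expect to be most delicate is pinning down the sharp threshold uniformly in $p$. A single wave-packet family $\widehat{f}_N(u)=e^{-2\pi iNu}\chi((u-u_0)/\delta)$ reproduces the correct exponent only at isolated values of $p$ unless one optimises jointly over the packet width $\delta$ and the size $\asymp|x|$ of the caustic region on which the output is $\asymp|x|^{-1/2}$; moreover the endpoint $p=\infty$ must be handled separately, since unimodular test functions are unavailable for finite $p$, so the clean uniform statement really goes through the duality-plus-interpolation route. By comparison the tensor reduction to one variable is a minor technical point: it is justified by the rapid nonstationary-phase decay of $k(x,t)$ away from the critical box, which confines all relevant mass to the neighbourhood of $(u_0,\dots,u_0)$ where $\Phi\equiv1$.
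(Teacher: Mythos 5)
A preliminary remark: the paper itself contains no proof of Proposition \ref{casolp}; it is quoted from \cite[Proposition 6.1]{GobalFIO}, so your proposal can only be compared with that reference and with the way the proposition is used in the proof of Proposition \ref{pro3.1}. Measured against either, it has two genuine gaps.

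First, your argument proves (parts of) a statement different from the one claimed, and the discrepancy is fatal because the statement as transcribed here is in fact false. Your $p=\infty$ computation yields $\|T_{I,\f,\sigma}\|_{L^\infty\to L^\infty}\gtrsim\langle x_0\rangle^{m}\prod_k|x_{0,k}|^{1/2}\asymp t^{\,m+d/2}$ along the diagonal, which blows up precisely when $m>-d/2$; it says nothing when $m\leq-d/2$, whereas the claim asserts unboundedness for all $m<d/2$. And indeed for $m\leq-d/2$ the operator \emph{is} bounded on $L^\infty$: the $L^\infty\to L^\infty$ norm equals $\sup_x\langle x\rangle^m\|\Phi e^{2\pi i x\cdot\tilde{\varphi}}\|_{\cF L^1}$, and the paper's own Lemmas \ref{l2} and \ref{l3} give the matching upper bound $\|\Phi e^{2\pi i x\cdot\tilde{\varphi}}\|_{\cF L^1}\lesssim\langle x\rangle^{d/2}$ (you invoke this upper bound yourself). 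Likewise at $p=2$: for every $m\leq0$ the substitution $v_k=\tilde{\varphi}(u_k)$ plus Plancherel gives boundedness on $L^2$, while the claim asserts unboundedness for all $m<0$. The resolution is that in the source the symbol is $\langle x\rangle^{-m}\Phi(u)$; with the symbol written as $\langle x\rangle^{m}\Phi(u)$ the correct hypothesis is $m>-d(1/2-1/p)$. This corrected form is also the only one consistent with Step 3 of the proof of Proposition \ref{pro3.1}, where the result is applied to the symbol $\langle x\rangle^{-m}\Phi(u)$ under the hypothesis $m<1/p-1/2$. A blind proof had to detect this sign issue; instead the proposal asserts that its endpoint computations confirm the threshold $m<d(1/2-1/p)$, which they actually contradict.

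Second, even for the corrected statement, your treatment of $2<p<\infty$ is not a proof. Unboundedness cannot be obtained by ``interpolating between the endpoints'': Riesz--Thorin only says that the set of exponents of boundedness is convex, so unboundedness at $p=2$ and $p=\infty$ is logically compatible with boundedness on an intermediate window, and no argument can run from the endpoints inward. The alternative appeal to Seeger--Sogge--Stein/Miyachi--Peral is inapplicable: those sharp results concern phases positively homogeneous of degree one in the frequency variable and a loss of $(d-1)|1/2-1/p|$ derivatives, while here $\tilde{\varphi}$ is a compactly supported perturbation of the identity and the loss is $d(1/2-1/p)$ of spatial decay; the sharp threshold for this class is precisely the proposition under discussion, so the citation would be circular. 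What does work, uniformly in $p\in[2,\infty]$ and in one stroke (and is, in essence, the argument of the quoted reference), is a direct chirp test: take $\hat{f}_y(u)=\prod_k\chi(u_k)\,e^{-2\pi i y\cdot\tilde{\varphi}(u)}$ with $\chi$ a bump supported near your point $u_0$ where $\tilde{\varphi}''(u_0)\neq0$, so that $\Phi\,\chi^{\otimes d}=\chi^{\otimes d}$. Then $T_{I,\f,\sigma}f_y(x)=\langle x\rangle^m\prod_k\int\chi(u_k)e^{2\pi i(x_k-y_k)\tilde{\varphi}(u_k)}\,du_k$, whence $|T_{I,\f,\sigma}f_y(x)|\gtrsim\langle y\rangle^m$ for $|x-y|\leq c$, while one-dimensional nondegenerate stationary phase (a single critical point per factor, since $\tilde{\varphi}''\neq0$ on $\supp\chi$) gives $\|f_y\|_{L^p}\asymp\prod_k\langle y_k\rangle^{1/p-1/2}$; taking $y=(s,\dots,s)$, the quotient is $\asymp s^{\,m+d(1/2-1/p)}\to\infty$. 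The same localization is needed to repair your $p=\infty$ step: with the full amplitude $\Phi$, the function $\tilde{\varphi}''$ necessarily vanishes somewhere on $\supp\Phi$ (it has compact support), producing multiple and degenerate critical points, so the lower bound should be transferred from $\chi^{\otimes d}$ to $\Phi$ via the algebra property of $\cF L^1$, namely $\|\chi^{\otimes d}e^{2\pi i x\cdot\tilde{\varphi}}\|_{\cF L^1}\leq\|\chi^{\otimes d}\|_{\cF L^1}\,\|\Phi e^{2\pi i x\cdot\tilde{\varphi}}\|_{\cF L^1}$, rather than by the non-stationary-phase remark at the end of your proposal.
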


\section{Continuity in $L^1$ with loss of decay}\label{contl1}

We consider an integral operator $A$  formally defined in \eqref{IO} and
with kernel $K(x,y)$  in \eqref{K}.
We assume
\begin{equation}\label{Phi}
\Phi\in  M^1(\bR^d),\quad 
\beta: (0,\infty)\to \bR.
\end{equation}
 Then, the kernel $K$  is well-defined for every $x, y \in \bR^d$. Indeed, since $M^1(\rd)\hookrightarrow L^1(\rd)$, the integral in \eqref{K} is  absolutely convergent. 
Inserting the kernel  expression \eqref{K} in the operator $A$, defined in  \eqref{IO}, and using the absolute convergence of the integrals we can apply Fubini's Theorem and infer
\begin{equation}\label{FIO2}
A f(x)=\intrdd   e^{- 2 \pi i  (\b(|u|)u \cdot  y-x\cdot u)}\Phi(u) f(y) \,dy \,du.
\end{equation}
\par
That is, the operator $A$ can be written in the form  of a Fourier integral operator  of type II. We recall that a { FIO of type II} with phase $\f$ and symbol $\sigma$  has the general form
\begin{equation}\label{type2}
T_{II,\f,\sigma}f(x)=\int_{\rdd}e^{-2\pi i (\f(y,u)-x\cdot u)}\sigma(y,u) f(y)dy\,d u
\end{equation}
hence
\begin{equation}
A=T_{II,\f,\sigma},\quad \text{with}\,\ \f(y,u)=\b(|u|)u \cdot  y\,\,\text{and}\,\,\, \sigma(y,u)=\Phi(u).
\end{equation}
FIO's of type II are the formal adjoints of FIO's of type I, defined in \eqref{FIO}.
Namely,
\begin{equation}\label{aggiunto}
(T_{I, \f,\sigma})^\ast=T_{II,\f,\sigma}.
\end{equation}

In general we do not expect that the integral operator $A$ in \eqref{IO} with kernel $K$ in \eqref{K} is continuous on $L^p(\rd)$, $1\leq p\leq \infty$, $p\not=2$. Indeed, we expect a loss of decay, as witnessed by the following example.

\begin{prop}\label{pro3.1}
	In dimension $d=1$, for any $1\leq p\leq 2, $ consider the weight function $$v_m(y)=(1+|y|)^{m},\quad y\in\bR,$$
	with $m\in\bR$ such that
	\begin{equation}
	m<\frac1p -\frac12.
	\end{equation}
Let $ \beta\in\cC^\infty((0,+\infty))$ 
	 such that   \begin{equation}\label{fi}
	\tilde{\f}(u)=\beta (|u|)u
	\end{equation}
	 extends to a nonlinear smooth diffeomorphism $\R\to\R$ satisfying
	\begin{equation}\label{tildaf}
	\tilde{\f}(u)=u,\quad |u|\geq 1.
	\end{equation}
	(hence, $\b(|u|)=1$, for $|u|\geq 1$). 
Let $\Phi\in \cC^\infty_0(\bR)$, $\Phi(u)=1$ for $|u|\leq 1$.\par
Then the operator $A$ in \eqref{FIO2} does not extend to a bounded operator from $L^p_{v_{m}}(\bR)$ to $L^p(\bR)$.
\end{prop}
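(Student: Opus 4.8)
The plan is to obtain the failure of boundedness by duality, reducing it to the unboundedness of a type I FIO already covered by Proposition \ref{casolp}. Recall from \eqref{FIO2} that $A=T_{II,\f,\Phi}$ is a FIO of type II with phase $\f(y,u)=\tilde{\f}(u)\,y$, where $\tilde{\f}(u)=\beta(|u|)u$, and symbol $\Phi(u)$. I would argue by contradiction, assuming that $A$ extends to a bounded operator $L^p_{v_m}(\bR)\to L^p(\bR)$. (As a sanity check on the mechanism, in the case $p=1$ one sees directly that $\int_{\bR}|K(x,y)|\,dx=\|\Phi\,e^{-2\pi i y\tilde{\f}(\cdot)}\|_{\cF L^1(\bR)}$, whose growth in $y$ is exactly the Beurling--Helson phenomenon alluded to in the Introduction; the general range $1\le p\le 2$ is handled uniformly by the duality below.)

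First I would absorb the weight into the symbol. Writing $f=v_m^{-1}g$, boundedness of $A\colon L^p_{v_m}\to L^p$ is equivalent to boundedness on the unweighted space $L^p(\bR)$ of the operator $\tilde A:=A\circ M_{v_m^{-1}}$, where $M_{v_m^{-1}}$ denotes multiplication by $v_m^{-1}=(1+|\cdot|)^{-m}$. By the form \eqref{type2}, $\tilde A$ is again a type II FIO, with the same phase $\f(y,u)=\tilde{\f}(u)\,y$ but with symbol $\sigma(y,u)=\langle y\rangle^{-m}\Phi(u)$.

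Next I would dualize. Since $1\le p\le 2$, the conjugate exponent satisfies $2\le p'\le\infty$ and, as $p<\infty$, $\tilde A$ is bounded on $L^p(\bR)$ if and only if its adjoint $\tilde A^\ast$ is bounded on $L^{p'}(\bR)$. The symbol $\sigma(y,u)=\langle y\rangle^{-m}\Phi(u)$ is real-valued, so by the adjunction \eqref{aggiunto} (applied to this symbol, which depends on both variables) $\tilde A^\ast=T_{I,\f,\sigma}$ is precisely the type I FIO with phase $\f(x,u)=\tilde{\f}(u)\,x$ and symbol $\sigma(x,u)=\langle x\rangle^{-m}\Phi(u)$. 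By hypothesis $\tilde{\f}$ is a nonlinear smooth diffeomorphism of $\bR$ with $\tilde{\f}(u)=u$ for $|u|\ge 1$, and $\Phi\in\cC_0^\infty(\bR)$ equals $1$ on $[-1,1]$, so the hypotheses of Proposition \ref{casolp} are satisfied. Applying Proposition \ref{casolp} with Lebesgue exponent $p'\in[2,\infty]$ and symbol-weight exponent $-m$, the assumption $m<1/p-1/2$ is precisely the condition that places us in the range for which Proposition \ref{casolp} forces $T_{I,\f,\sigma}$ to be unbounded on $L^{p'}(\bR)$. This contradicts the boundedness of $\tilde A^\ast$, and hence $A$ cannot extend to a bounded operator $L^p_{v_m}(\bR)\to L^p(\bR)$.

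The hard part is the bookkeeping in this reduction, and it must be carried out carefully. One has to verify that incorporating $v_m^{-1}$ into the symbol really produces a bona fide type II FIO, that the adjunction \eqref{aggiunto} persists for symbols depending on both $y$ and $u$, and—most delicately—that the weighted $L^p$--$L^{p'}$ duality converts the \emph{source} weight $v_m$ on the domain of $A$ into the \emph{symbol} weight $\langle x\rangle^{-m}$ for the type I FIO, with the conjugate Lebesgue exponent $p'$, so that the critical threshold $m<1/p-1/2$ in the statement lines up exactly with the unboundedness range supplied by Proposition \ref{casolp}.
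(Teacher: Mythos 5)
Your proposal follows essentially the same route as the paper's own proof: both arguments reduce the claim, via $L^p$--$L^{p'}$ duality together with absorption of the weight into the symbol, to the unboundedness on $L^{p'}(\bR)$ of the type I FIO with phase $\tilde{\f}(u)x$ and symbol $\langle x\rangle^{-m}\Phi(u)$, which is then supplied by Proposition \ref{casolp}. The only difference --- you absorb the weight into the symbol before dualizing, whereas the paper dualizes first and then writes $w_{-m}T_{I,\f,\sigma}=T_{I,\f,\tilde{\sigma}}$ --- is immaterial bookkeeping, and your exponent accounting matches the paper's.
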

\begin{proof} 	\emph{Step 1: Rephrasing the thesis}. Since $v_m(y)=(1+|y|)^{m}$ is a weight equivalent to
	$w_m(y):=\la y\ra ^m=(1+y^2)^{m/2}$, we can work with $w_m$ in place of $v_m$. 
	Since $A$ can be written as a type II Fourier integral operator, this amounts to considering the continuity from $L^p_{w_{m}}(\bR)$ to $L^p(\bR)$ of the operator $T_{II,\f,\sigma}$ in \eqref{type2}
	with $\f(y,u)=\tilde{\f}(u)y$ and symbol 
	$\sigma(y,u)=\Phi(u)$,
	with $\tilde{\f}$ and  $\Phi$ as in the statement.\par\medskip
	\emph{Step 2: From type II FIOs to type I FIOs.} By duality, the continuity of $T_{II,\f,\sigma}$ is equivalent to the boundedness of the adjoint $(T_{II,\f,\sigma})^*=  T_{I,\f,\sigma}$, 
	from $L^p(\bR)$ to $L^p_{w_{-m}}(\bR)$, for $2\leq p\leq \infty$.\par\medskip
	\emph{Step 3: Results for type I FIOs}. The continuity of $T_{I,\f,\sigma}$ 	from $L^p(\bR)$ to $L^p_{w_{-m}}(\bR)$ is equivalent to the boundedness 
	of the operator $w_{-m}T_{I,\f,\sigma}$ on $L^p(\bR)$.
	Now, observe that 
	\begin{align*}
	\la x \ra ^{-m}T_{I,\f,\sigma}f(x)&= \la x \ra ^{-m}\int e^{2\pi
		i\f(x,u)}
	\sigma(x,u)\hat{f}(u)\,du\\&=\int e^{2\pi
		i\f(x,u)}
	\la x \ra ^{-m}\sigma(x,u)\hat{f}(u)\,du\\
	&=\int e^{2\pi
		i\f(x,u)}
\tilde{\sigma}(x,u)\hat{f}(u)\,du:=T_{I,\f,\tilde{\sigma}}
	\end{align*}
with $\tilde{\sigma}(x,u)=\la x \ra ^{-m} \sigma(x,u)=\la x \ra ^{-m}\Phi(u).$\par
Now the type I FIO $T_{I,\f,\tilde{\sigma}}$ is not bounded on $L^p$, $2\leq p\leq\infty$, by Proposition \ref{casolp}.

\end{proof}

Continuity in weghted $L^1$ spaces, i.e.\ with a loss of decay, for the operator $A$ in \eqref{FIO2} can be proved by a Schur-type estimate for the kernel $K$. The following result addresses such estimates and Corollary \ref{cor:L1_cont} the corresponding continuity result. 
\par
\begin{theorem}\label{the:K_bounded}
Consider functions $\Phi\in M^1(\rd)$ and $\beta:(0,+\infty)\to\R$. Moreover, assume that for some exponent  $\g \in (-1,1]$, with $l=\lfloor d/2\rfloor +1$,
\begin{equation}\label{eq_u<1}
    \left|\partial^{\a}\b(|u|)u\right|\leq C_{\a}|u|^{\g +1-|\a|},\quad \text{ for }\,\,0\not=|u|\leq 1,\,\, |\a|\leq l,
\end{equation}
where $C_{\a}>0$, and 
\begin{equation}\label{eq_u>1}
    \left|\partial^{\a}\b(|u|)u\right|\leq C'_{\a},\quad\quad \text{ for }\,\, |u|\geq 1,\,\, 2\leq |\a|\leq  2l,
\end{equation}
with  $C'_{\a}>0$. Then the integral kernel in \eqref{K} satisfies
\begin{equation}\label{E1}
\int_{\bR^d}|K(x,y)|dx \leq C(1+|y|)^{{d}/(\g +1)},
\end{equation}
for a suitable constant $C>0$ independent of $y$.
\end{theorem}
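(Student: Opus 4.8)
The plan is to recognize that, for fixed $y$, the map $x\mapsto K(x,y)$ is (up to a reflection) an inverse Fourier transform. Writing $\tilde\varphi(u):=\beta(|u|)u$ and $\psi_y(u):=\tilde\varphi(u)\cdot y$, and setting $g_y(u):=\Phi(u)\,e^{-2\pi i\psi_y(u)}$, formula \eqref{K} gives $K(\cdot,y)=\cF^{-1}g_y$, so that $\int_{\rd}|K(x,y)|\,dx=\|\cF^{-1}g_y\|_{L^1}=\|g_y\|_{\cF L^1}$. Thus the theorem is equivalent to the estimate $\|g_y\|_{\cF L^1}\lesssim(1+|y|)^{d/(\gamma+1)}$, i.e.\ a bound for the $\cF L^1$-norm of the product of the fixed $M^1$-function $\Phi$ with the unimodular factor $e^{-2\pi i\psi_y}$.

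Next I would peel off $\Phi$ from the oscillatory factor. Since $\psi_y$ has only a H\"older-type singularity at the origin, Lemma \ref{l1} shows $e^{-2\pi i\psi_y}$ is locally in $\cF L^1$ there, while away from the origin it is smooth; hence $e^{-2\pi i\psi_y}\in W(\cF L^1,L^\infty)$. Using the multiplication estimate of Proposition \ref{p3} (with $p=q=1$, recalling $M^1=W(\cF L^1,L^1)$) together with the embedding $M^1\hookrightarrow\cF L^1$ recalled in the introduction, I get $\|g_y\|_{\cF L^1}\lesssim\|g_y\|_{M^1}\lesssim\|e^{-2\pi i\psi_y}\|_{W(\cF L^1,L^\infty)}\,\|\Phi\|_{M^1}$. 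This isolates the only $y$-dependent quantity and turns the statement into the estimate $\|e^{-2\pi i\psi_y}\|_{W(\cF L^1,L^\infty)}\lesssim(1+|y|)^{d/(\gamma+1)}$.

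To extract the precise power of $|y|$ I would rescale. Put $\lambda:=(1+|y|)^{1/(\gamma+1)}\ge1$ and note $e^{-2\pi i\psi_y}=F_\lambda$, where $F_\lambda(u)=F(\lambda u)$ and $F(v)=e^{-2\pi i\Psi(v)}$ with $\Psi(v):=\tilde\varphi(v/\lambda)\cdot y$. The dilation Lemma \ref{l3}, with $p=1$, $q=\infty$, then yields $\|e^{-2\pi i\psi_y}\|_{W(\cF L^1,L^\infty)}\lesssim\lambda^{d}\,\|F\|_{W(\cF L^1,L^\infty)}$, and $\lambda^{d}=(1+|y|)^{d/(\gamma+1)}$ is exactly the claimed growth; so it remains only to bound $\|F\|_{W(\cF L^1,L^\infty)}$ by a constant independent of $y$. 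The whole point of this normalization is that $\Psi$ obeys $y$-independent derivative bounds: using \eqref{eq_u<1} one finds $|\partial^\alpha\Psi(v)|\le C_\alpha|v|^{\gamma+1-|\alpha|}$ for $0\ne|v|\le\lambda$ and $|\alpha|\le l$, while \eqref{eq_u>1}, combined with $\lambda^{\gamma+1}=1+|y|\ge|y|$ and $\gamma+1\le2\le|\alpha|$, gives $|\partial^\alpha\Psi(v)|\le C_\alpha'\lambda^{\gamma+1-|\alpha|}\le C_\alpha'$ for $|v|\ge\lambda$ and $2\le|\alpha|\le2l$.

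Finally I would bound $\|F\|_{W(\cF L^1,L^\infty)}$ uniformly in $y$ by a partition of unity adapted to the two regimes. Near the origin the H\"older bounds put us in the setting of Lemma \ref{l1}, whose last assertion provides a local $\cF L^1$ bound with constant depending only on $d$ and the $C_\alpha$; on the region $|v|\ge\lambda$ the uniform control of the derivatives of order $2,\dots,2l$ puts us in the setting of Lemma \ref{l2}, giving a uniform $W(\cF L^1,L^\infty)$ contribution; on the intermediate annulus the derivatives of order $2,\dots,l$ are again uniformly bounded (there $|v|\ge1$ forces $|v|^{\gamma+1-|\alpha|}\le1$), so a local estimate of the same kind closes the gap. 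I expect this last step---assembling the local pieces into the global $W(\cF L^1,L^\infty)$-norm while verifying that each constant is genuinely independent of $\lambda$, hence of $y$---to be the main technical obstacle; the rest is a chain of the cited embedding, multiplication and dilation properties. The admissibility of the reductions (in particular $g_y\in M^1\subset\cF L^1$, so that $K(\cdot,y)\in L^1$ a priori) follows from $\Phi\in M^1$ and $e^{-2\pi i\psi_y}\in W(\cF L^1,L^\infty)$.
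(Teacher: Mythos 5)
Your proposal follows the paper's own strategy step for step: the identity $\int_{\rd}|K(x,y)|\,dx=\|\Phi e^{-2\pi i\psi_y}\|_{\cF L^1}$, the factorization $\|\Phi e^{-2\pi i\psi_y}\|_{\cF L^1}\lesssim\|e^{-2\pi i\psi_y}\|_{W(\cF L^1,L^\infty)}\|\Phi\|_{M^1}$ via Proposition \ref{p3}, the rescaling through Lemma \ref{l3}, and the derivative bounds for the rescaled phase on $\{|v|\le\lambda\}$ and $\{|v|\ge\lambda\}$ are exactly the paper's steps, and your computations there are correct; your normalization $\lambda=(1+|y|)^{1/(\gamma+1)}$ even tidies up the exposition by merging the paper's separate cases $|y|\le1$ and $|y|\ge1$.

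The gap is exactly where you predicted it: the intermediate annulus $\{1\le|v|\le\lambda\}$ cannot be handled by ``a local estimate of the same kind''. That region is not contained in any fixed compact set (its outer radius $\lambda$ grows with $|y|$), so Lemma \ref{l1} is of no use there: its conclusion carries a constant depending on the compactly supported cutoff $\eta$, which would have to cover the whole annulus and hence vary with $y$, destroying uniformity. The only cited tool valid on unbounded regions is Lemma \ref{l2}, and it requires bounded derivatives of every order $2\le|\alpha|\le 2l$, whereas on the annulus you control (and claim) only the orders $2\le|\alpha|\le l$. So, as written, the middle piece of your partition cannot be estimated uniformly in $y$ by either lemma, and the proof does not close.

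The paper resolves this by never isolating the annulus: with a single fixed cutoff $\chi$ equal to $1$ near the origin, it applies Lemma \ref{l2} once to $e^{-iB_y}(1-\chi)$ on the whole set $\{|u|\ge1\}$, observing that the rescaled H\"older estimate \eqref{eq_u<y}, namely $|\partial^\alpha B_y(u)|\lesssim|u|^{\gamma+1-|\alpha|}$, is already a \emph{uniform} bound on $\{1\le|u|\le\lambda\}$ for every order $|\alpha|\ge2$ (since $\gamma+1\le2\le|\alpha|$), while \eqref{eq_u>1} covers $\{|u|\ge\lambda\}$. Note that this uses the H\"older-type control of the phase up to order $2l$, not just $l$, on the intermediate region (the paper invokes \eqref{eq_u<y} in the full range $2\le|\alpha|\le2l$); your restriction to order $l$ is a faithful reading of \eqref{eq_u<1} as stated, and is precisely what leaves you stuck. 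To complete your argument you should therefore (a) record the H\"older bound for the rescaled phase for all orders $|\alpha|\le2l$ on $0\ne|v|\le\lambda$, and (b) merge the annulus with the outer region and apply Lemma \ref{l2} there in one stroke, rather than appeal to a local estimate that does not exist for $y$-dependent regions.
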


\begin{proof}
To infer the estimate in \eqref{E1}, we write
$$\| K(\cdot,y)\|_{L^1} =\| \cF^{-1}(\Phi e^{-2\pi i\f(y,\cdot)})\|_{L^1}=\|\Phi e^{-2\pi i\f(y,\cdot)}\|_{\cF L^1},$$
where the phase is  $\f(y,u)=\beta(|u|)u\cdot y$. \par
We are going to show that
$$ e^{-2\pi i \f(y,\cdot)}\in W(\cF L^1, L^{\infty})(\bR^d),$$
with
\begin{equation}\label{f1}
\| e^{-2\pi i \f(y,\cdot)}\|_{W(\cF L^1, L^{\infty})}\leq C (1+|y|)^{d/(\gamma+1)};
\end{equation}
then the algebra property of $W(\cF L^1, L^{\infty})$ for $p=1$ in \eqref{ap}
yields
$$\|\Phi e^{-2\pi i\f(y,\cdot)}\|_{\cF L^1}\lesssim \|\Phi e^{-2\pi i\f(y,\cdot)}\|_{M^1}\lesssim \| e^{-2\pi i\f(y,\cdot)}\|_{W(\cF L^1, L^{\infty})}\|\Phi\|_{M^1}
$$
and this gives the claim \eqref{E1}. \par
We study the cases $|y|\leq 1$ and $|y|\geq1$ separately. The more difficult point is  $|y|\geq 1$, which is proved as follows. Using the dilation properties of $W(\cF L^1, L^{\infty})$ in Lemma \ref{l3}, the estimate \eqref{f1} follows from
\begin{equation}\label{l23}  e^{-2 \pi i\f (y,|y|^{-1/(\g +1)} u)}\in W(\cF L^1, L^{\infty}),
\end{equation}
uniformly with respect to $y$. Indeed, 
\begin{align*}
\left\|e^{-2 \pi i  \b\left(|u|\right)u \cdot y}\right\|_{W(\cF L^1, L^{\infty})} &\lesssim |y|^{d/(\gamma+1)}\left\|e^{-2 \pi i  \b\left(\frac{|u|}{|y|^{1/(\g +1)}}\right)\frac{u}{|y|^{1/(\gamma +1)}} \cdot y}\right\|_{W(\cF L^1, L^{\infty})} \\
&\lesssim (1+|y|)^{d/(\gamma+1)}\left\|e^{-2 \pi i  \b\left(\frac{|u|}{|y|^{1/(\gamma +1)}}\right)\frac{u}{|y|^{1/(\gamma +1)}} \cdot y}\right\|_{W(\cF L^1, L^{\infty})}.
\end{align*}
 The key idea is to use Lemmas \ref{l1} and \ref{l2} with $\mu(u)$ being 
 the phase $\f(y,u)=\beta(|u|)u\cdot y$ now rescaled by $|y|^{-1/(\g +1)}$: set
\begin{equation}
B_y(u):= 2 \pi \b\left(\frac{|u|}{|y|^{1/(\g +1)}}\right)\frac{u}{|y|^{1/(\g +1)}} \cdot y.
\end{equation}
We consider a test function $\chi \in \cC^{\infty}_{0}(\rd)$, such that $\chi(u)=1$ for $|u|\leq 4/3$ and $\chi(u)=0$ when $|u| \geq 5/3$ and write
$$e^{-i B_y(u)}=e^{-i B_y(u)}\chi(u)+ e^{-i B_y(u)}(1-\chi(u)).$$
We first show that 
\begin{equation*}
e^{-  i B_y} \chi\in W(\cF L^1, L^{\infty})(\bR^d),
\end{equation*}
uniformly with respect to $y$, that is, since $\chi$ is compactly supported, 
\begin{equation}\label{fasel1}
e^{-  i B_y} \chi\in \cF L^1(\bR^d).
\end{equation}
Using Lemma \ref{l1}, it is  enough to verify that the phase $B_y$ satisfies the estimates
\begin{equation}\label{e2}
|\partial^{\a}B_y(u)|\leq C_{\a}|u|^{\g +1-|\a|}
\end{equation}
for $|u|\leq 2$ (which is a neighborhood of the support of $\chi$) and $|\a|\leq \lfloor d/2\rfloor +1$. 

Observe that the estimates in the assumption \eqref{eq_u<1} actually hold for, say, $|u|\leq 2$, because \eqref{eq_u>1} implies \eqref{eq_u<1} for $1\leq |u|\leq 2$. Hence we have 
\begin{equation}\label{eq_u<y}
    |\partial^{\a}B_y(u)|\leq \frac{2 \pi C_{\a}}{|y|^{|\a|/(\g +1)}}\left( \frac{|u|}{|y|^{1/(\g +1)}}\right)^{\g+1-|\a|} |y|= 2 \pi C_{\a} |u|^{\g+1-|\a|}
\end{equation}
for $|u| \leq 2|y|^{1/(\g +1)} $, hence for $|u|\leq 2$. Since $\gamma+1>0$, by Lemma \ref{l1} we have \eqref{fasel1}.\\
We now  use Lemma \ref{l2} to show that
\begin{equation*}
e^{-  i B_y}(1-\chi) \in W(\cF L^1, L^{\infty})(\bR^d).
\end{equation*} 
It is sufficient to verify that the phase satisfies 
\begin{equation}\label{i1}
|\partial^{\a}B_y(u)|\leq C_{\a}
\end{equation}
for $|u|\geq 1$ and $2\leq |\a|\leq 2l$, with $l=\lfloor d/2 \rfloor +1$. If $1\leq |u|\leq |y|^{1/(\gamma +1)}$ the estimate \eqref{i1} follows by \eqref{eq_u<y}, because $\gamma\leq 1$. On the other hand, if $|u|\geq |y|^{1/(\gamma +1)}$ we use the hypothesis \eqref{eq_u>1}:
$$
|\partial^{\a}B_y(u)|\leq  \frac{2 \pi C'_{\a}}{|y|^{|\a|/(\gamma +1)}} |y|= \frac{2 \pi C'_{\a}}{|y|^{(|\a|-\gamma -1)/(\gamma +1)}} \leq {C}_{\a}''$$
for $2\leq |\a|\leq 2l$, because $\gamma \leq 1$. Then, Lemma \ref{l2} gives the claim.\par
It remains the case $|y|<1$. Here  we can argue as above without the dilation factor $|y|^{-1/(\g+1)}$ and repeating the same pattern we obtain the claim.
\end{proof}

In the previous result the weakest growth is reached when $\gamma=1$, the exponent in \eqref{E1} in that case being $d/2$. That growth is the same obtained even for smooth phase, as proved in the following result, and cannot be further reduced, as shown in Proposition \ref{pro3.1}. 
\begin{corollary}
Consider functions $\Phi\in M^1(\rd)$ and $\beta:(0,+\infty)\to\R$. Moreover, setting $l=\lfloor d/2\rfloor +1$, assume that the function $\b(|u|)$ extends to a $\cC^{2l}$ function on $\rd$ and satisfies
\begin{equation}\label{eq_u}
    \left|\partial^{\a}\b(|u|)u\right|\leq C_{\a},\quad \text{ for } u\in\R^d \,\text{ and }\,\,2\leq  |\a|\leq 2l.
\end{equation}
 Then, the integral kernel in \eqref{K} satisfies 
\begin{equation}
\int_{\R^d}|K(x,y)|dx \leq C(1+|y|)^{\frac{d}{2}}.
\end{equation}
\end{corollary}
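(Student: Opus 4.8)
The plan is to run the argument of Theorem \ref{the:K_bounded} with $\gamma=1$, simplified by the fact that now the map $\tilde\varphi(u):=\beta(|u|)u$ is of class $\mathcal{C}^{2l}$ on all of $\R^d$ (not merely on $\R^d\setminus\{0\}$), so that the scaled phase is globally smooth and Lemma \ref{l2} can be applied to it directly, dispensing with the cut-off $\chi$ and Lemma \ref{l1} used there. As in that proof, I first write
\[
\|K(\cdot,y)\|_{L^1}=\|\Phi\, e^{-2\pi i\varphi(y,\cdot)}\|_{\cF L^1},\qquad \varphi(y,u)=\tilde\varphi(u)\cdot y,
\]
and, using $M^1\hookrightarrow\cF L^1$ together with the algebra estimate \eqref{ap}, reduce the claim to
\[
\|e^{-2\pi i\varphi(y,\cdot)}\|_{W(\cF L^1,L^\infty)}\le C(1+|y|)^{d/2}\qquad\text{uniformly in }y.
\]

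For $|y|\ge 1$ I would use the dilation Lemma \ref{l3} with $\lambda=|y|^{1/2}\ge 1$, exactly as in Theorem \ref{the:K_bounded} with $\gamma=1$: writing $e^{-2\pi i\varphi(y,u)}=h(\lambda u)$ with $h(u)=e^{-iB_y(u)}$ and $B_y(u)=2\pi\tilde\varphi(u/|y|^{1/2})\cdot y$, the lemma produces the factor $\lambda^d=|y|^{d/2}$, so it remains to bound $\|e^{-iB_y}\|_{W(\cF L^1,L^\infty)}$ uniformly in $y$. The key point is that $B_y\in\mathcal{C}^{2l}(\R^d)$, so Lemma \ref{l2} applies to $\mu=-B_y$ once we check $\|\partial^\alpha B_y\|_{L^\infty}\le C_\alpha$ for $2\le|\alpha|\le 2l$. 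This is a one-line scaling computation: since $\partial^\alpha B_y(u)=2\pi|y|^{-|\alpha|/2}(\partial^\alpha\tilde\varphi)(u/|y|^{1/2})\cdot y$, the global bound \eqref{eq_u} gives
\[
|\partial^\alpha B_y(u)|\le 2\pi\, C_\alpha\,|y|^{1-|\alpha|/2}\le 2\pi\, C_\alpha
\]
for $|\alpha|\ge 2$ and $|y|\ge 1$, uniformly in $u$ and $y$. Lemma \ref{l2} then yields the uniform bound $\|e^{-iB_y}\|_{W(\cF L^1,L^\infty)}\le C$, and combining with the dilation factor gives $C|y|^{d/2}$.

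For $|y|<1$ no dilation is needed: applying Lemma \ref{l2} directly to $\mu(u)=-2\pi\tilde\varphi(u)\cdot y$, the derivatives of order $2\le|\alpha|\le 2l$ are bounded by $2\pi|y|C_\alpha\le2\pi C_\alpha$ by \eqref{eq_u}, so $\|e^{-2\pi i\varphi(y,\cdot)}\|_{W(\cF L^1,L^\infty)}\le C$. Together the two cases give the claimed estimate.

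The point to be stressed --- and the only real subtlety --- is that this is \emph{not} a literal specialization of Theorem \ref{the:K_bounded} to $\gamma=1$. Its hypothesis \eqref{eq_u<1} at the orders $|\alpha|=0,1$ would read $|\tilde\varphi(u)|\le C|u|^2$ near the origin, i.e.\ $\beta(0^+)=0$, which is not assumed here (e.g.\ $\beta\equiv1$ is allowed). The global $\mathcal{C}^{2l}$ regularity is exactly what lets one replace the decomposition into $\chi e^{-iB_y}$ (handled by Lemma \ref{l1}) and $(1-\chi)e^{-iB_y}$ (handled by Lemma \ref{l2}) by a single application of Lemma \ref{l2}, which only sees derivatives of order $\ge2$. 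Equivalently, one could peel off the linear part $\beta(0^+)\,u\cdot y$ of the phase as a modulation: this merely translates $K(\cdot,y)$ by $\beta(0^+)y$ and so leaves $\|K(\cdot,y)\|_{L^1}$ unchanged, reducing matters to the remainder $\tilde\varphi(u)-\beta(0^+)u$, which does vanish to second order at the origin and to which Theorem \ref{the:K_bounded} applies verbatim with $\gamma=1$.
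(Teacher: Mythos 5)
Your proof is correct and follows essentially the same route as the paper's: reduce via the algebra property \eqref{ap} to a Wiener amalgam bound on the exponential, rescale by $|y|^{1/2}$ using Lemma \ref{l3}, and apply Lemma \ref{l2} directly to the globally $\cC^{2l}$ rescaled phase (no cut-off, no Lemma \ref{l1}), with the identical scaling computation $|\partial^\alpha B_y(u)|\lesssim C_\alpha|y|^{1-|\alpha|/2}\le C_\alpha$ for $|\alpha|\ge 2$, $|y|\ge 1$, and the same dilation-free treatment of $|y|<1$. Your closing observation --- peeling off the linear part $\beta(0^+)u\cdot y$ as a modulation, which leaves $\|K(\cdot,y)\|_{L^1}$ invariant --- is a nice alternative and is in fact exactly the mechanism the paper formalizes separately in Corollary \ref{cor:bound_K}.
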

\begin{proof}
The proof uses the same arguments as in Theorem \ref{the:K_bounded}. We split into the cases $|y|\geq 1$ and $|y|< 1$. We study first $|y|\geq 1$
and  prove that 
\begin{equation}\label{i2}
e^{- 2 \pi i  \b\left(\frac{|u|}{|y|^{1/2}}\right)\frac{u}{|y|^{1/2}} \cdot y} \in W(\cF L^1, L^{\infty})(\R^d)
\end{equation}
uniformly with respect to $y$.
Using Lemma \ref{l2}, we are reduced  to verify that the rescaled phase$$ B_y(u):= 2 \pi \b\left(\frac{|u|}{|y|^{1/2}}\right)\frac{u}{|y|^{1/2}} \cdot y$$ satisfies the estimate 
\begin{equation*}
|\partial^{\a}B_y(u)|\leq C_{\a},\quad \forall u\in\rd
\end{equation*}
 and $2\leq |\a|\leq 2l$. By the hypothesis \eqref{eq_u},
$$
|\partial^{\a}B_y(u)|\leq C_{\a} \frac{2 \pi}{|y|^{|\a|/2}}\cdot |y|= {C_{\a}'} \frac1{|y|^{(|\a|-2)/2}} \leq {C_{\a}^{''}}$$
since $|\a|\geq 2$ and $|y|\geq 1$; this gives \eqref{i2}.\par
Then, by Lemma \ref{l3}, we have
\begin{equation*}
    \left\|e^{-2 \pi i  \b\left(|u|\right)u \cdot y}\right\|_{W(\cF L^1, L^{\infty})} 
    \lesssim (1+|y|)^{d/2}\left\|e^{-2 \pi i  \b\left(\frac{|u|}{|y|^{1/2}}\right)\frac{u}{|y|^{1/2}} \cdot y}\right\|_{W(\cF L^1, L^{\infty})}.
\end{equation*}
As $\p \in M^1(\R^d)$, we have that $\Phi(u) e^{- i  \b(|u|)u \cdot y} \in M^1(\R^d)$. Moreover, we can write 
\[
K(x,y)=\cF^{-1}\left[\Phi(u) e^{- i  \b(|u|)u \cdot y)}\right](x).
\]
Hence,
\begin{align*}
    \int_{\R^d}|K(x,y)|dx & = \left\|\cF^{-1}\left[\Phi(u) e^{- i  \b(|u|)u \cdot y)}\right]\right\|_{L^1}\lesssim \left\|\cF^{-1}\left[\Phi(u) e^{- i  \b(|u|)u \cdot y)}\right]\right\|_{M^1}\\
    &\lesssim \left\|\Phi(u) e^{- i  \b(|u|)u \cdot y)}\right\|_{M^1}
    \lesssim \|\Phi\|_{M^1} \left\|e^{-2 \pi i  \b\left(|u|\right)u \cdot y}\right\|_{W(\cF L^1, L^{\infty})} \\
    &\leq C(1+|y|)^{\frac{d}{2}},
\end{align*}
for some $C>0$.

The case $|y|<1$ is attained with the same pattern above, without the dilation factor ${|y|^{-\frac1{2}}}$.
\end{proof}
\begin{corollary}\label{cor:bound_K}
Consider functions $\Phi\in M^1(\rd)$ and $\beta:(0,+\infty)\to\R$. Assume that for some $\g \in (-1,1]$ and $a \in \bR$, 
\begin{equation}\label{betatilde}
\widetilde{\b}:= \b-a
\end{equation} satisfies, with $l=\lfloor d/2\rfloor +1$,
\begin{equation}\label{i4}
    \left|\partial^{\a}\widetilde{\b}(|u|)u\right|\leq C_{\a}|u|^{\g +1-|\a|},\quad \text{ for }\,\,|u|\leq 1,\,\, |\a|\leq l,
\end{equation}
for $C_{\a}>0$, and 
\begin{equation}\label{i5}
    \left|\partial^{\a}\widetilde{\b}(|u|)u\right|\leq C'_{\a},\quad\quad \text{ for }\,\, |u|\geq 1,\,\, 2\leq |\a|\leq  2l,
\end{equation}
with  $C'_{\a}>0$.  Then the integral kernel in \eqref{K} satisfies
\begin{equation}\label{i6}
\int_{\R^d}|K(x,y)|dx \leq C(1+|y|)^{{d}/(\g +1)},
\end{equation}
for a suitable constant $C>0$, independent of the variable $y$.
\end{corollary}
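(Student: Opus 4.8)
The plan is to reduce the statement directly to Theorem~\ref{the:K_bounded} applied to $\widetilde{\b}$ in place of $\b$, exploiting the fact that subtracting a constant $a$ from $\b$ only produces a translation of the kernel in the $x$ variable, which leaves the $L^1$-norm in \eqref{i6} unchanged.

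Concretely, first I would split the phase using $\b=\widetilde{\b}+a$, so that in \eqref{K} the integrand factors as
\[
e^{-2\pi i(\b(|u|)u\cdot y-u\cdot x)}=e^{-2\pi i\,\widetilde{\b}(|u|)u\cdot y}\,e^{2\pi i\,u\cdot(x-ay)}.
\]
Setting $g_y(u):=\Phi(u)\,e^{-2\pi i\,\widetilde{\b}(|u|)u\cdot y}$, this identifies the kernel as
\[
K(x,y)=\cF^{-1}[g_y](x-ay)=\widetilde{K}(x-ay,y),
\]
where $\widetilde{K}(x,y):=\cF^{-1}[\Phi(u)\,e^{-2\pi i\,\widetilde{\b}(|u|)u\cdot y}](x)$ is precisely the kernel of Theorem~\ref{the:K_bounded} associated with the phase $\widetilde{\b}(|u|)u\cdot y$. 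In other words, the constant $a$ is absorbed into a modulation $e^{2\pi i\,u\cdot(x-ay)}$ in the frequency variable $u$, which upon inverse Fourier transform becomes a pure translation by $ay$ in the physical variable $x$.

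Next I would observe that the hypotheses \eqref{i4} and \eqref{i5} imposed on $\widetilde{\b}$ are verbatim the assumptions \eqref{eq_u<1} and \eqref{eq_u>1} of Theorem~\ref{the:K_bounded} (with $\widetilde{\b}$ playing the role of $\b$). Hence that theorem applies and yields
\[
\int_{\R^d}|\widetilde{K}(x,y)|\,dx\leq C(1+|y|)^{d/(\g+1)}.
\]
Finally, performing the change of variables $x\mapsto x+ay$ — a translation of $\R^d$ for each fixed $y$, under which the Lebesgue measure is invariant — gives
\[
\int_{\R^d}|K(x,y)|\,dx=\int_{\R^d}|\widetilde{K}(x-ay,y)|\,dx=\int_{\R^d}|\widetilde{K}(x,y)|\,dx\leq C(1+|y|)^{d/(\g+1)},
\]
which is exactly \eqref{i6}.

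I do not expect a genuine obstacle here: the entire quantitative content of the estimate is already carried by Theorem~\ref{the:K_bounded}, and the only point requiring care is the bookkeeping in the factorization above, namely the correspondence between the frequency modulation $e^{2\pi i\,u\cdot(x-ay)}$ and the resulting shift by $ay$ in $x$. Once this is set up correctly, translation-invariance of the $L^1$-norm closes the argument immediately.
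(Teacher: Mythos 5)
Your proof is correct, and it takes a slightly different (and arguably cleaner) route than the paper. The paper does not invoke the \emph{statement} of Theorem~\ref{the:K_bounded} as a black box: it goes back into the \emph{proof} of that theorem, extracts the estimate $\|e^{-2\pi i\,\widetilde{\b}(|u|)u\cdot y}\|_{W(\cF L^1,L^\infty)}\lesssim(1+|y|)^{d/(\g+1)}$, writes $e^{-2\pi i\,\b(|u|)u\cdot y}=M_{-ay}\,e^{-2\pi i\,\widetilde{\b}(|u|)u\cdot y}$, and then uses the invariance \eqref{tfwiener} of the $W(\cF L^1,L^\infty)$ norm under time-frequency shifts, before re-running the multiplication argument with $\Phi\in M^1(\rd)$. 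You instead work on the Fourier-dual side: the modulation $e^{2\pi i\,u\cdot(x-ay)}$ of the symbol becomes, after inverse Fourier transform, a pure translation $K(x,y)=\widetilde{K}(x-ay,y)$ of the kernel, and translation invariance of Lebesgue measure finishes the proof. This is the same underlying insight (a constant shift $a$ in $\b$ acts as a modulation, which is invisible to the relevant norm), but your implementation is more elementary — it needs only the conclusion \eqref{E1} of Theorem~\ref{the:K_bounded} plus the change of variables $x\mapsto x+ay$, and avoids citing both the internals of that proof and the amalgam-space shift-invariance proposition. The paper's version, on the other hand, keeps the whole argument at the level of $W(\cF L^1,L^\infty)$ norms of the phase factor, which is the formulation reused elsewhere in the paper (e.g.\ in Corollary~\ref{cor:L1_cont} and the smooth-phase corollary), so the two proofs differ only in where the translation/modulation duality is cashed in.
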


\begin{proof}
By the proof of Theorem \ref{the:K_bounded} we know that $e^{-2 \pi i  \widetilde{\b}(|u|)u \cdot y} \in W(\cF L^1, L^{\infty})(\rd) $ with $$\left\|e^{-2 \pi i  \widetilde{\b}\left(|u|\right)u \cdot y}\right\|_{W(\cF L^1, L^{\infty})} \lesssim (1+|y|)^{d/(\g+1)}.$$
By \eqref{betatilde}, 
$$e^{-2 \pi i  \b(|u|)u \cdot y}= e^{-2 \pi i  a u \cdot y}\cdot e^{- 2 \pi i  \widetilde{\b}(|u|)u \cdot y}=M_{-ay} e^{- 2 \pi i  \widetilde{\b}(|u|)u \cdot y}.$$
Using the invariance property of $W(\cF L^1, L^{\infty})(\R^d)$ with respect to time-frequency shifts in \eqref{tfwiener}:
\begin{align*}
\left\|e^{-2 \pi i {\b}\left(|u|\right)u \cdot y}\right\|_{W(\cF L^1, L^{\infty})} &=
\left\|M_{-ay} e^{- 2 \pi i  \widetilde{\b}(|u|)u \cdot y}\right\|_{W(\cF L^1, L^{\infty})}\\&
\asymp \left\|e^{-2 \pi i  \widetilde{\b}\left(|u|\right)u \cdot y}\right\|_{W(\cF L^1, L^{\infty})} \\&
\leq C (1+|y|)^{d/(\g+1)}. 
\end{align*}
This concludes the proof.
\end{proof}

We end up this section by using the previous results for the integral kernel $K(x,y)$ to attain the $L^1$-boundedness for the corresponding operator $A$. The cost is a loss of decay, as explained below.
\begin{corollary}\label{cor:L1_cont}
Assume the hypotheses of Corollary \ref{cor:bound_K} and consider the weight function 
$$v(y)= (1+ |y|)^{d/(\g+1)}.$$
 Then the integral operator $A$ in \eqref{FIO2} with kernel $K$ in \eqref{K} is bounded from  $L^1_{v}(\rd)$ into $ L^1(\rd)$.
\end{corollary}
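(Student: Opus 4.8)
The plan is to obtain the continuity $L^1_v(\rd)\to L^1(\rd)$ as a direct consequence of the weighted kernel estimate \eqref{i6} from Corollary \ref{cor:bound_K}, by means of a Schur-type argument; no further time-frequency machinery is needed at this last step, since all the analytic work has already been absorbed into the bound for $\int_{\rd}|K(x,y)|\,dx$.

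Concretely, for $f\in L^1_v(\rd)$ I would start from the definition of $A$ in \eqref{FIO2}, move the absolute value inside the integral, and interchange the order of integration. Since the integrand $|K(x,y)|\,|f(y)|$ is nonnegative, Tonelli's theorem applies and gives
\[
\|Af\|_{L^1}\leq \int_{\rd}\int_{\rd}|K(x,y)|\,|f(y)|\,dy\,dx=\int_{\rd}|f(y)|\Big(\int_{\rd}|K(x,y)|\,dx\Big)\,dy.
\]
I would then insert the Schur-type bound \eqref{i6}, namely $\int_{\rd}|K(x,y)|\,dx\leq C(1+|y|)^{d/(\gamma+1)}=C\,v(y)$, into the inner integral, yielding
\[
\|Af\|_{L^1}\leq C\int_{\rd}|f(y)|\,v(y)\,dy=C\,\|f\|_{L^1_v},
\]
which is exactly the asserted boundedness.

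I expect no genuine obstacle at this stage: the difficult part, the proof of \eqref{i6}, has already been carried out in Corollary \ref{cor:bound_K}, and the present step merely repackages it. The only point deserving (routine) attention is the legitimacy of Fubini--Tonelli, which follows from the nonnegativity of the integrand together with the finiteness of the iterated integral guaranteed by \eqref{i6} and the assumption $f\in L^1_v(\rd)$; alternatively, one may first establish the estimate for $f$ in the dense subspace $\cS(\rd)$ and then extend to all of $L^1_v(\rd)$ by density and continuity.
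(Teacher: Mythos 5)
Your proposal is correct and follows essentially the same route as the paper: both pass the absolute value inside, swap the order of integration via Fubini--Tonelli (harmless here since the integrand is nonnegative), and then insert the Schur-type bound \eqref{i6} to get $\|Af\|_{L^1}\leq C\|f\|_{L^1_v}$. Your extra remark justifying the interchange of integrals is a minor refinement the paper leaves implicit; nothing else differs.
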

\begin{proof}
By Corollary \ref{cor:bound_K}, we know that the kernel $K(x,y)$  satisfies the estimate in \eqref{i6}.
 Let $f \in L^1_{v}(\rd)$; using Fubini's Theorem and the estimate in \eqref{i6},
\begin{align*}
    \|Af(x)\|_{L^1} &= \int_{\R^d}|Af(x)|dx = \int_{\R^d}\left|\int_{\R^d}K(x,y)f(y)dy\right|dx  \\
    &\leq \int_{\R^d}\int_{\R^d}|K(x,y)||f(y)|dydx = \int_{\R^d}|f(y)|\left(\int_{\R^d}|K(x,y)|dx\right) dy \\
    &\leq \int_{\R^d}|f(y)|C(1+|y|)^{{d}/(\g +1)} dy = C \|f\|_{L^1_v},
\end{align*}
as desired.
\end{proof}
\section{Continuity in $L^2$}\label{contl2}
A natural question is whether the assumptions of Corollary \ref{cor:bound_K}, that give continuity  of the operator $A$ on  $L^1(\rd)$ with a loss of decay, guarantee at least continuity of  $A$ on $L^2(\rd)$ without any loss. The answer is negative even in dimension $d=1$, as shown by the following result.
\begin{prop}\label{41}
Let $d=1$. There exists an operator $A$ as in \eqref{FIO2}, with $\beta$ and $\Phi$ satisfying the assumptions of Corollary \ref{cor:bound_K}, that is not bounded on $L^2(\rd)$.
\end{prop}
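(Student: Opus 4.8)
The plan is to compute $\|Af\|_{L^2}$ exactly via Plancherel, reducing the question to the boundedness of a weighted composition operator, and then to choose the phase so that the resulting weight is unbounded (while still locally integrable, consistently with the $L^1$-result of Section \ref{contl1}).

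First I would rewrite $A$. Performing the $y$-integration in \eqref{FIO2} first, for $f\in\cS(\bR)$ one gets
\[
Af(x)=\int_\bR e^{2\pi i xu}\Phi(u)\hat f(\tilde{\f}(u))\,du=\cF^{-1}\big[\Phi\cdot(\hat f\circ\tilde{\f})\big](x),\qquad \tilde{\f}(u)=\beta(|u|)u .
\]
By Plancherel's theorem, $\|Af\|_{L^2}=\|\Phi\cdot(\hat f\circ\tilde{\f})\|_{L^2}$, so $A$ extends to a bounded operator on $L^2(\bR)$ if and only if $g\mapsto \Phi\cdot(g\circ\tilde{\f})$ is bounded on $L^2(\bR)$. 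Writing $g=\hat f$ and changing variables $\xi=\tilde{\f}(u)$ on each half-line turns the square of this norm into $\int_\bR |\Phi(\tilde{\f}^{-1}(\xi))|^2\,|(\tilde{\f}^{-1})'(\xi)|\,|g(\xi)|^2\,d\xi$, so boundedness is governed by the $L^\infty$-size of the weight $|\Phi(\tilde{\f}^{-1}(\xi))|^2|(\tilde{\f}^{-1})'(\xi)|$, i.e. by $|\Phi(u)|^2/|\tilde{\f}'(u)|$.

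Next I would make the weight blow up where $\Phi\equiv1$. Choose $a=0$ and $\gamma\in(0,1)$, take $\beta(r)=r^\gamma$ for $0<r\le1$ extended to a smooth function on $(0,\infty)$ that is constant for $r\ge2$, and let $\Phi\in\cC_0^\infty(\bR)$ with $\Phi\equiv1$ on $[-1,1]$. These satisfy the hypotheses of Corollary \ref{cor:bound_K}: with $\widetilde{\b}=\beta$ (since $a=0$) and $l=1$, on $|u|\le1$ we have $\widetilde{\b}(|u|)u=|u|^\gamma u$, whence $|\widetilde{\b}(|u|)u|=|u|^{\gamma+1}$ and $|(\widetilde{\b}(|u|)u)'|=(\gamma+1)|u|^{\gamma}$, giving \eqref{i4}; and on $|u|\ge1$ the function $\beta(|u|)u$ is smooth and eventually linear, so its second derivative is bounded, giving \eqref{i5}. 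The point is that now $\tilde{\f}(u)=|u|^\gamma u$ has $\tilde{\f}'(u)=(\gamma+1)|u|^\gamma\to0$ as $u\to0$, exactly inside the region $\{\Phi=1\}$. Finally I would exhibit a concentrating sequence. On $u>0$ the map $\xi=\tilde{\f}(u)=u^{\gamma+1}$ is a $\cC^1$-diffeomorphism of $(0,\infty)$, and changing variables gives
\[
\|Af\|_{L^2}^2\geq \int_0^\infty |\Phi(u)|^2|\hat f(u^{\gamma+1})|^2\,du=\frac{1}{\gamma+1}\int_0^\infty |\Phi(\xi^{1/(\gamma+1)})|^2\,|\hat f(\xi)|^2\,\xi^{-\gamma/(\gamma+1)}\,d\xi .
\]
Fixing a nonnegative $\psi\in\cC_0^\infty((1,2))$ and setting $\hat f_n(\xi)=n^{1/2}\psi(n\xi)$ (so $f_n\in\cS(\bR)$ with $\|f_n\|_{L^2}=\|\psi\|_{L^2}$ independent of $n$), the support of $\hat f_n$ lies in $(1/n,2/n)$, where $\Phi(\xi^{1/(\gamma+1)})=1$ for $n$ large; hence
\[
\|Af_n\|_{L^2}^2\geq \frac{1}{\gamma+1}\,n^{\gamma/(\gamma+1)}\int|\psi(\eta)|^2\eta^{-\gamma/(\gamma+1)}\,d\eta\longrightarrow\infty,
\]
since $\gamma/(\gamma+1)>0$. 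This contradicts boundedness and proves the claim.

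The computation is essentially forced once the operator is put in the form $\cF^{-1}[\Phi\cdot(\hat f\circ\tilde{\f})]$, so there is no genuine analytic obstacle. The only delicate points are bookkeeping: checking that the chosen $\beta$ meets every clause of Corollary \ref{cor:bound_K} (in particular that the near-origin Hölder profile $r^\gamma$ is compatible with a large-$r$ extension satisfying \eqref{i5}), and observing that the weight $\xi^{-\gamma/(\gamma+1)}$ is locally integrable (as $\gamma/(\gamma+1)<1$) but not bounded. This last dichotomy is precisely the conceptual heart of the statement: it explains why the $L^1$-boundedness with loss obtained in Section \ref{contl1} does not upgrade to $L^2$-boundedness.
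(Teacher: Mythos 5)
Your proposal is correct and takes essentially the same approach as the paper's own proof: there too one takes $\beta(r)=r^\gamma$ with $\gamma\in(0,1)$ and $\Phi$ nonvanishing at the origin, reduces $A$ via Parseval to the weighted composition $\hat f\mapsto \Phi\cdot(\hat f\circ\tilde{\varphi})$, and performs the change of variables $\xi=\tilde{\varphi}(u)$ to produce the weight $|\xi|^{-\gamma/(1+\gamma)}|\Phi(|\xi|^{1/(1+\gamma)})|^2$, unbounded near $\xi=0$ (the paper concludes by noting that a multiplication operator on $L^2$ is bounded iff its symbol is essentially bounded, which your concentrating sequence $f_n$ merely makes explicit). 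The only cosmetic repair: since your $\beta$ equals $r^\gamma$ only for $r\leq 1$, the change-of-variables display should be restricted to $u\in(0,1)$, which is harmless because $\supp \hat f_n\subset(1/n,2/n)$ keeps everything in that region for large $n$.
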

\begin{proof}
Consider a  function $h(r)$ such that $\Phi(u)=h(|u|)\in \cC^\infty_0(\R)$, and $h(0)\not=0$. For $\gamma\in (0,1)$, set $\beta(u)=u^\gamma$.
Finally,  take $\chi \in \cC^{\infty}_0(\R)$ such that $ \chi(u)= 1 $ when $u \in \supp \Phi$, and consider the function $\tilde{\b}(u)= \chi(u)\b(u)$. Consider the operator $A$ with integral kernel
 $$
K(x,y)= \int_{\R}h(|u|) e^{- 2 \pi i  (u  x+ \tilde{\b}(|u|)u y)}du =\int_{\R}h(|u|) e^{- 2 \pi i  (u  x+ \b(|u|)u  y)}du.
$$
We now show that $A$ is not bounded on $L^2(\rd)$.\par
 For  $f \in\cS(\bR)$,
\begin{align*}
    Af(x)&= \int_{\R}K(x,y)f(y)dy= \int_{\R}\int_{\R}h(|u|) e^{- 2 \pi i  (u \cdot x+ \b(|u|)u \cdot  y)} f(y) du dy \\
    &=\int_{\R}h(|u|) e^{- 2 \pi i  u \cdot x} \left(\int_{\R} f(y)e^{- 2 \pi i  \b(|u|)u\cdot y} dy\right) du =\\
    &=\int_{\R}h(|u|) e^{- 2 \pi i  u \cdot x} \hat{f}( \b(|u|)u ) du = \cF\left[h(|u|) \hat{f}( \b(|u|)u )\right](x).
\end{align*}
Then, by Parseval's Theorem,
$$
    \|Af\|^2_2 = \left\|h(|u|) \hat{f}( \b(|u|)u )\right\|^2_2 = \int_{\R}|h(|u|)|^2 |\hat{f}( \b(|u|)u ) |^2 dx.
$$
We perform the change of variable 
$$\tilde{u}= \b(|u|)u= |u|^{\gamma}u = \left\{ \begin{array}{ll}
u^{\gamma+1}, & u\geq 0,\\
-|u|^{\gamma+1}, & u<0,
\end{array}\right.,$$
so that
$$u = \left\{ \begin{array}{ll}
\tilde{u}^{\frac1{\gamma+1}}, & \tilde{u}\geq 0,\\
-(-\tilde{u})^{\frac1{\gamma+1}}, & \tilde{u}<0.
\end{array}\right.$$
and $du = \frac1{1+\gamma} |\tilde{u}|^{\frac1{1+\gamma}-1} d\tilde{u}$. In this way, we obtain 
\begin{equation*}
    \|Af\|^2_2 = \int_{\R}|h(|u|)|^2 |\hat{f}( \b(|u|)u ) |^2 du 
    =\frac1{1+\gamma} \int_{\R} |\tilde{u}|^{\frac1{1+\gamma}-1}|h(|\tilde{u}|^{\frac1{1+\gamma}})|^2 |\hat{f}(\tilde{u}) |^2 d\tilde{u}.
\end{equation*}
Now,  the last expression is controlled by  $C \|f\|^2_{L^2}$, for a suitable constant $C>0$ and for every $f \in \cS(\R)$, if and only if $$|\tilde{u}|^{-\frac{\gamma}{1+\gamma}}|h(|\tilde{u}|^{\frac1{1+\gamma}})|^2 \in L^{\infty}(\R),$$
(notice that $h(|\tilde{u}|^{\frac1{1+\gamma}})$ has compact support) and this fails since $-\gamma/(1+\gamma) < 0$ and $|h(|u|)|\geq \delta>0$ in a neighborhood of $0$.
\end{proof}

We now look for suitable assumptions on the functions $\Phi$ and $\beta$ which guarantee $L^2$-continuity of the operator $A$. A successful choice is shown below.
\begin{theorem}\label{tl2cont}
Consider $\Phi \in L^{\infty}(\R^d)\cap L^1(\rd)$. Let $\b: (0,\infty) \to \R$ satisfy the following assumptions:\\
(i) $\b\in \cC^1((0,\infty))$;\\
(ii) There exists  $\delta>0$ such that $\b(r)\geq \delta$, for all $r >0$;\\
(iii) There exist $B_1, B_2 >0$, such that \begin{equation}\label{ei1}B_1 \leq \frac{d}{dr}(\b(r)r) \leq B_2, \quad \forall \,r >0.\end{equation}
Then the integral operator $A$ with kernel $K$ in \eqref{K} is bounded on $L^2(\R^d)$. 
\end{theorem}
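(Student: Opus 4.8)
The plan is to conjugate $A$ by the Fourier transform, turning it into a multiplication-composition operator, and then to reduce the $L^2$ bound to a change-of-variables estimate. First, for $f\in\cS(\rd)$ I would insert the kernel \eqref{K} into \eqref{IO} and apply Fubini's theorem---legitimate since $\Phi\in L^1(\rd)$ and $f\in\cS(\rd)$ make the double integral absolutely convergent---to obtain
\[
Af(x)=\int_{\rd}\Phi(u)\,\hat f(\b(|u|)u)\,e^{2\pi i u\cdot x}\,du=\cF^{-1}\big[\Phi\cdot(\hat f\circ\Psi)\big](x),\qquad \Psi(u):=\b(|u|)u.
\]
By Plancherel's theorem this gives $\|Af\|_{L^2}^2=\int_{\rd}|\Phi(u)|^2\,|\hat f(\Psi(u))|^2\,du$. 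Using $\Phi\in L^\infty(\rd)$ to bound $|\Phi(u)|^2\le\|\Phi\|_{L^\infty}^2$, the problem is reduced to controlling $\int_{\rd}|\hat f(\Psi(u))|^2\,du$ by $\|\hat f\|_{L^2}^2=\|f\|_{L^2}^2$.

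This last estimate rests on the change of variables $v=\Psi(u)$, whose analysis is the heart of the proof. Writing $u=r\omega$ with $r=|u|$ and $\omega\in S^{d-1}$, the map $\Psi$ fixes directions and acts on the radius through the profile $\rho(r):=\b(r)r$, so that $\Psi(r\omega)=\rho(r)\omega$. Differentiating, $D\Psi(u)=\b(r)\,\Id+r^{-1}\b'(r)\,uu^{T}$, whose eigenvalues are $\rho'(r)=\frac{d}{dr}(\b(r)r)$ along $u$ and $\b(r)$ (with multiplicity $d-1$) in the transverse directions; hence $\det D\Psi(u)=\rho'(r)\,\b(r)^{d-1}$. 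Assumptions (ii) and (iii) now give the uniform lower bound $\det D\Psi(u)\ge B_1\delta^{d-1}>0$ for all $u\ne0$. Injectivity of $\Psi$ on $\rd\setminus\{0\}$ follows because $\b>0$ forces $\Psi$ to preserve directions while $\rho'\ge B_1>0$ makes $\rho$ strictly increasing on each half-line; since moreover $\rho(r)\to\infty$ as $r\to\infty$, the map $\Psi$ is a $\cC^1$ diffeomorphism of $\rd\setminus\{0\}$ onto $\{v\in\rd:|v|>\rho(0^+)\}\subseteq\rd$.

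Applying the change-of-variables formula together with the Jacobian bound then yields
\[
\int_{\rd}|\hat f(\Psi(u))|^2\,du=\int_{\Psi(\rd\setminus\{0\})}\frac{|\hat f(v)|^2}{\det D\Psi(\Psi^{-1}(v))}\,dv\le\frac{1}{B_1\delta^{d-1}}\,\|\hat f\|_{L^2}^2,
\]
so that $\|Af\|_{L^2}^2\le\|\Phi\|_{L^\infty}^2\,(B_1\delta^{d-1})^{-1}\,\|f\|_{L^2}^2$ for every $f\in\cS(\rd)$; boundedness on all of $L^2(\rd)$ then follows by density of $\cS(\rd)$. I expect the change-of-variables step to be the main obstacle: one must check that $\Psi$ is a genuine (global) diffeomorphism onto its image rather than merely a local one, and that its Jacobian is bounded below---this is exactly where both hypotheses enter, (iii) ensuring the radial profile $\rho$ is strictly monotone with derivative bounded below and (ii) controlling the $d-1$ transverse eigenvalues. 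The loss of smoothness of $\Psi$ at the origin is immaterial, concerning only a single point of Lebesgue measure zero.
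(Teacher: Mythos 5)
Your proposal is correct and takes essentially the same approach as the paper: conjugate $A$ by the Fourier transform, apply Plancherel, bound $|\Phi|$ by $\|\Phi\|_{L^\infty}$, and control the change of variables $v=\b(|u|)u$ through hypotheses (ii) and (iii). The only cosmetic difference is that the paper performs the change of variables in polar coordinates, substituting $\tilde r=\b(r)r$ in the radial integral, whereas you compute the full Jacobian $\det D\Psi=\rho'(r)\,\b(r)^{d-1}$ directly; the two factorizations are identical (the paper's factors $\frac{d}{d\tilde r}\varphi^{-1}(\tilde r)$ and $(\varphi^{-1}(\tilde r)/\tilde r)^{d-1}$ are exactly the reciprocals of your eigenvalues $\rho'$ and $\b^{d-1}$) and even produce the same constant $\|\Phi\|_{L^\infty}^2\,\delta^{-(d-1)}B_1^{-1}$.
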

\begin{proof}
We first observe that, since $\Phi \in  L^1(\rd)$, the integral defining the kernel $K(x,y)$ is absolutely convergent and $K$ is well-defined. 
Let $f \in \cS(\R^d)$, using Fubini's Theorem we can write
\begin{align*}
    Af(x)&= \int_{\R^d}K(x,y)f(y)dy= \int_{\R^d}\int_{\R^d}\Phi(u) e^{- 2 \pi i  (\b(|u|)u \cdot y-u \cdot x)} f(y) du dy \\
    &=\int_{\R^d}\Phi(u) e^{2 \pi i  u \cdot x} \left(\int_{\R^d} f(y)e^{- 2 \pi i  \b(|u|)u \cdot y} dy\right) du \\
    &=\int_{\R^d}\Phi(u) e^{2 \pi i  u \cdot x} \hat{f}( \b(|u|)u ) du = \cF^{-1}\left[\Phi(u) \hat{f}( \b(|u|)u )\right](x).
\end{align*}
Then, by Parseval's Theorem,
$$
    \|Af\|^2_2 = \|\Phi(u) \hat{f}( \b(|u|)u )\|^2_2=  \int_{\R^d}|\Phi(u)|^2 |\hat{f}( \b(|u|)u ) |^2 du.
$$
Changing to polar coordinates $u = r  \theta$, with $r >0$ and $\theta \in \mathbb{S}^{d-1}$, we have  $du= r^{d-1}drd\theta$ and 
\begin{align*}
    \|Af\|^2_2 &= \int_{\R^d}|\Phi(u)|^2 |\hat{f}( \b(|u|)u ) |^2 du\\
    &=\int_0^{\infty}\int_{S^{d-1}}|\Phi(r \theta)|^2 |\hat{f}( \b(r)r \theta ) |^2 r^{d-1}d\theta dr.
\end{align*}
Observe that the function $\varphi(r):=\b(r)r$ is strictly increasing by assumption $(iii)$. Performing the change of variable $\tilde{r}= \varphi(r)$, as $r\in (0,\infty)$, there exists an $a \geq 0$ such that $\varphi((0,+\infty))= (a,\infty)\subseteq (0,\infty)$. Moreover, $\varphi(r)$ has an inverse $\varphi^{-1}(\tilde{r})=r$ such that $$B_2^{-1} \leq \frac{d}{d\tilde{r}}(\varphi^{-1}(\tilde{r})) \leq B_1^{-1} \ \ \ \ \text{ for all }\tilde{r} >0.$$
Further, by assumption $(ii)$,
$$\frac1{\delta}\geq \frac1{\b(r)}= \frac{r}{\b(r)r}= \frac{\varphi^{-1}(\tilde{r})}{\tilde{r}}. $$
Then we can write,
\begin{align*}
    \|Af\|^2_2 &= \int_0^{\infty}\int_{S^{d-1}}|\Phi(r \theta)|^2 |\hat{f}( \b(r)r \theta ) |^2 r^{d-1}d\theta dr  \\
    &=\int_0^{\infty}\int_{S^{d-1}}|\Phi(\varphi^{-1}(\tilde{r}) \theta)|^2 |\hat{f}( \tilde{r} \theta ) |^2 (\varphi^{-1}(\tilde{r}))^{d-1}\frac{d}{d \tilde{r}}(\varphi^{-1}(\tilde{r}))d\theta d\tilde{r} \\
    &\leq \sup_{r,\theta}\left\{|\Phi(\varphi^{-1}(\tilde{r}) \theta)|^2  \left(\frac{\varphi^{-1}(\tilde{r})}{\tilde{r}}\right)^{d-1} \frac{d}{d \tilde{r}}
		(\varphi^{-1}(\tilde{r}))\right\}\int_0^{\infty}\int_{S^{d-1}} |\hat{f}( \tilde{r} \theta ) |^2 (\tilde{r})^{d-1}d\theta d\tilde{r} \\
    &\leq\|\Phi\|_{\infty}^2  \left(\frac1{\delta}\right)^{d-1} B_1^{-1}\|f\|_2^2.
\end{align*}
This gives $\|Af\|_2\leq C\|f\|_2$, for every $f\in\cS(\rd)$. By density argument we obtain the claim for every $f\in\lrd$.
\end{proof}
\begin{remark}
The previous proof still works if we change  the function  $\b$ with $-\b$. Hence, under the assumptions of Theorem \ref{tl2cont}
with assumptions $(ii)$ and $(iii)$ replaced by:\\
(ii)' There exists  $\delta<0$ such that $\b(r)\leq \delta$, for all $r >0$;\\
(iii)' There exist $B_1, B_2 <0$, such that \begin{equation*}B_1 \leq \frac{d}{dr}(\b(r)r) \leq B_2, \quad \forall \,r >0;\end{equation*} 
 the integral operator $A$ with kernel $K$ in \eqref{K} is bounded on $L^2(\rd)$. 
\end{remark}

We now exhibit a class of examples of functions $\Phi\in M^1(\rd)$, hence fulfilling the assumptions of Theorem \ref{the:K_bounded} and related corollaries, as well as those of Theorem \ref{tl2cont}, which are of special interest in the study of Boltzmann equation, cf.\ \cite{Lod2010}. 

\begin{example}\label{exm1}
Consider the function 
 \begin{equation}\label{PM1}
\Phi(u)=\frac{|u|}{(1+|u|^2)^m},\quad \mbox{for}\,\,\,m > \frac{d+1}{2}.
\end{equation}
Then $\Phi\in  M^1(\rd)$. (Observe that $\Phi(u)=h(|u|)$).
\end{example}
\begin{proof}
We consider a function $\chi \in \cC^{\infty}_{0}(\rd)$, such that $\chi(u)=1$ when $|u|\leq1/{2}$ and $\chi(u)=0$ when $|u| \geq 1$. We write
$$\Phi(u)=\Phi(u)\chi(u)+\Phi(u)(1- \chi(u))
$$
and show that
\begin{equation}\label{eq_phichi}
\Phi(u) \chi(u) \in M^1(\R^d)
\end{equation}
and
\begin{equation}\label{eq_phi1-chi}
\Phi(u)(1- \chi(u)) \in M^1(\R^d).
\end{equation}

To prove \eqref{eq_phichi}, we choose another cut-off function  $\widetilde{\chi} \in \cC^{\infty}_{0}(\rd)$ such that $$\widetilde{\chi}(u)=1 \quad \textrm{for}\,\,u \in \mbox{supp}\,\chi;$$ then $\widetilde{\chi}\cdot \chi= \chi$. Consider now the function $h(u)=|u|$, which  is in $\cC^\infty(\rd\setminus\{0\})$ and positively homogeneous of degree $1$ and set $f=h\chi$.   Lemma \ref{l4} gives, for $\psi\in \cS(\rd)$,
$$|V_\psi f(x,\xi)|\leq C (1+|\xi|)^{-(d+1)}
$$
hence, by \eqref{normwiener},
$$\|f\|_{W(\cF L^1, L^{\infty})}=\| \|V_\psi f(x,\cdot)\|_{L^1}\|_{L^\infty}<\infty,$$
that is  $|u|\chi \in W(\cF L^1, L^{\infty})(\R^d)$. Since $$\frac{\widetilde{\chi}(u)}{(1+|u|^2)^m} \in \cS(\rd) \subseteq M^1(\rd).$$ we can write $$\Phi(u)\chi(u) = |u|\chi(u) \cdot \frac{\widetilde{\chi}(u)}{(1+|u|^2)^m} \in M^1(\rd), $$
by Proposition \ref{p3}.\par
Finally,  to show \eqref{eq_phi1-chi}, we observe that $\Phi(u)(1- \chi(u)) = 0$ for $|u|\leq 1/2$, 
hence the singularity at the origin is removed and  $\Phi(u)(1-\chi(u))\in W^{k,1}(\rd)$ for all $k\in\bN$, provided that $2m-1> d$. We then choose $k> d$ and  apply the inclusion relations between the Potential Sobolev space $W^{k,1}(\rd)$ and the Feichtinger's algebra $M^1(\rd)$ in Lemma \ref{l5},  which gives  \eqref{eq_phi1-chi}.
\end{proof}

\section{Acknowledgments} The authors are
grateful to Professors Ricardo J. Alonso  and  Bertrand Lods for proposing this study and for his valuable suggestions. The first two authors were partially supported by  the Gruppo Nazionale per l'Analisi Matematica, la Probabilit\`a e le loro Applicazioni (GNAMPA) of the Istituto Nazionale di Alta Matematica (INdAM). The third author was partially supported by the  Generalitat Valenciana (Project VALi+d Pre Orden 64/2014 and Orden 86/2016)
\bibliographystyle{amsalpha}

\bibliography{biblio_notes}
 
\end{document}